\documentclass[preprint,12pt]{elsarticle}

\usepackage{amsmath,amssymb,amsthm,mathtools,mathrsfs}
\usepackage{lmodern}
\usepackage[hidelinks]{hyperref}
\allowdisplaybreaks
\journal{Journal of Differential Equations}
\biboptions{sort&compress}

\numberwithin{equation}{section}
\newtheorem{theorem}{Theorem}[section]
\newtheorem{proposition}{Proposition}[section]
\newtheorem{lemma}{Lemma}[section]

\theoremstyle{definition}

\theoremstyle{remark}

\newcommand{\R}{\mathbb R}
\newcommand{\cL}{\mathcal L}
\newcommand{\Ha}{\dot H_a^1}
\newcommand{\Ham}{\dot H_a^{-1}}
\newcommand{\norm}[2]{\lVert #1\rVert_{#2}}

\newcommand{\eps}{\varepsilon}

\begin{document}

\begin{frontmatter}

\title{Strong and Weak-Type Dispersive Estimates for the Energy-Critical Nonlinear Schrödinger Equation with an Inverse-Square Potential}

\author[]{Tiesong Jiang}
\ead{1162163756jts@stu.xjtu.edu.cn}

\author[]{Kexue Li\corref{cor1}}
\ead{kxli@mail.xjtu.edu.cn}

%\author[xjtu]{Tiesong Jiang\corref{cor1}}
%\ead{}

%\author[xjtu]{Kexue Li}
%\ead{}

\cortext[cor1]{Corresponding author}

%\affiliation[xjtu]{
% organization={School of Mathematics, Xi'an Jiaotong University},
% city={Xi'an 710049},
% country={China}
%}

\begin{abstract}
We prove dispersive estimates for the three-dimensional defocusing energy-critical nonlinear Schr\"odinger equation associated with $\mathcal L_a=-\Delta+a|x|^{-2}$. For nonnegative potentials, we extend the known finite-$p$ theory to the endpoint $L^1\to L^\infty$. For negative potentials in the global well-posedness range, set $\sigma=\frac12-\sqrt{\frac14+a}$. We obtain the free strong decay rate for $2<p<3/\sigma$ and the limiting Lorentz estimate from $L^{(3/\sigma)',1}$ to $L^{3/\sigma,\infty}$. The restriction $p<3/\sigma$ is sharp for strong Lebesgue decay. The proof combines a finite-interval bootstrap, nonlinear real interpolation on bounded energy sets, and endpoint Sobolev--Lorentz estimates adapted to $\mathcal L_a$.
\end{abstract}

\begin{keyword}
energy-critical nonlinear Schr\"odinger equation \sep inverse-square potential \sep dispersive decay \sep Lorentz spaces 
%\MSC[2020] 35Q55 \sep 35B40 \sep 35J10 \sep 42B35
\end{keyword}

\end{frontmatter}

\section{Introduction}

We consider the defocusing energy-critical nonlinear Schr\"odinger equation
\begin{equation}
 \begin{cases}
  i\partial_tu-\cL_a u=|u|^4u, & (t,x)\in\R\times\R^3,\\
  u(0,x)=u_0(x),
 \end{cases}
 \qquad \cL_a=-\Delta+\dfrac{a}{|x|^2}.
 \label{eq:nls}
\end{equation}
The operator $\cL_a$ is the Friedrichs extension of the quadratic form
\[
 Q_a(f)=\int_{\R^3}\left(|\nabla f(x)|^2+
                  \frac{a}{|x|^2}|f(x)|^2\right)\,dx,
 \qquad f\in C_c^\infty(\R^3\setminus\{0\}).
\]
The sharp Hardy inequality implies that $Q_a$ is nonnegative for
$a\geq-\frac14$ as discussed in \cite{KillipSobolev}, for $a>-\frac14$,
\[
 \norm{\cL_a^{1/2}f}{L^2}\simeq_a\norm{\nabla f}{L^2}.
\]
Thus the form domain, denoted by $\Ha$, is the usual homogeneous energy
space $\dot H^1(\R^3)$ with an equivalent norm. Its dual with respect
to the distributional pairing is denoted by $\Ham$ and has norm
$\norm{F}{\Ham}=\norm{\cL_a^{-1/2}F}{L^2}$. We write
$U_a(t)=e^{-it\cL_a}$. Both the equation and the energy
\[
 E_a(u)=\frac12 Q_a(u)+\frac16\int_{\R^3}|u(x)|^6\,dx
\]
are invariant under $u(t,x)\mapsto\lambda^{1/2}u(\lambda^2t,\lambda x)$.

The global theory needed here is due to Killip, Miao, Visan, Zhang, and
Zheng \cite[Theorem~1.2]{KillipNLS} and continuous dependence follows from
\cite[Theorem~2.11]{KillipNLS}.

\begin{theorem}[Global well-posedness and scattering]
\label{thm:global}
Let $a>-\frac14+\frac1{25}$. For every $u_0\in\dot H^1(\R^3)$,
equation \eqref{eq:nls} has a unique global solution
\[
 u\in C(\R;\dot H^1(\R^3))\cap L^{10}_{t,x}(\R\times\R^3)
\]
satisfying
\begin{equation}
 \int_{\R}\int_{\R^3}|u(t,x)|^{10}\,dx\,dt
 \leq C_a\bigl(\norm{u_0}{\dot H^1}\bigr).
 \label{eq:critical-bound}
\end{equation}
Moreover, there are unique $u_\pm\in\dot H^1(\R^3)$ such that
\[
 \lim_{t\to\pm\infty}
 \norm{u(t)-U_a(t)u_\pm}{\dot H^1}=0.
\]
On bounded subsets of $\dot H^1$, the solution map is continuous in
$C([-T,T];\dot H^1)$ for every $T<\infty$.
\end{theorem}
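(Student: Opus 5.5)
This theorem is quoted from Killip, Miao, Visan, Zhang and Zheng rather than proved here, so the plan is to reconstruct their argument in outline. It proceeds by concentration-compactness and rigidity, in the style of Kenig--Merle as adapted by Colliander--Keel--Staffilani--Takaoka--Tao and Killip--Visan.

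\textbf{Step 1: local theory.} I would first establish Strichartz estimates for $U_a(t)$ and the equivalence of Sobolev norms $\|\cL_a^{s/2}f\|_{L^p}\simeq\|(-\Delta)^{s/2}f\|_{L^p}$ from \cite{KillipSobolev}. This equivalence holds only for $p$ in a range determined by $\sigma=\frac12-\sqrt{\frac14+a}$. Requiring $p=30/13$ with $s=1$ to lie in that range, which is where the quintic nonlinearity is estimated in $\dot S^1$, gives exactly the condition $a>-\frac14+\frac1{25}$.

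With these tools, the standard contraction argument in $L^{10}_{t,x}\cap L^{10/3}_t\dot W^{1,30/13}_x$ gives:
\begin{itemize}
\item local well-posedness;
\item small-data global existence;
\item a stability (long-time perturbation) lemma.
\end{itemize}
Continuous dependence on bounded sets of $\dot H^1$ over $[-T,T]$ follows from the stability lemma. Scattering then follows from the bound \eqref{eq:critical-bound} via the Duhamel formula and Strichartz, by taking $u_\pm=u_0-i\int_0^{\pm\infty}U_a(-s)|u|^4u\,ds$. So everything reduces to the a priori spacetime bound \eqref{eq:critical-bound}.

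\textbf{Step 2: critical element.} Argue by contradiction. Let $E_c$ be the infimal energy at which the $L^{10}$ bound fails. A linear profile decomposition adapted to $U_a$ handles the fact that $\cL_a$ breaks translation invariance. Profiles whose translation parameters run off to infinity, $|x_n|/\lambda_n\to\infty$, are governed by the free Laplacian. Profiles with only time translations, or both kinds of parameters going to infinity, are also controlled. Handling these profiles requires:
\begin{itemize}
\item convergence of $\cL_a^{x_n}$ to $-\Delta$ in the strong resolvent sense;
\item the free energy-critical result of CKSTT, which gives global solutions for such profiles;
\item the stability lemma, to embed those solutions as approximate solutions of \eqref{eq:nls}.
\end{itemize}
Combined with the Palais--Smale argument, this produces a minimal blowup solution with energy $E_c$ whose orbit is precompact modulo scaling only. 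In particular, the spatial center stays at the origin.

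\textbf{Step 3: rigidity.} The hardest part is excluding this critical element. Since the scaling parameter can be normalized, compactness yields almost-periodic solutions. I would use a Lin--Strauss or Morawetz identity centered at the origin, which behaves well because the potential is radial: $\partial_r(a|x|^{-2})$ contributes a term with a favorable sign via the virial computation. A truncated virial argument then rules out the soliton-like scenario, and the self-similar and cascade cases are excluded using reduced Duhamel formulas and negative-regularity arguments.

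The main obstacle I expect is the profile decomposition for negative $a$. There the singularity of the potential at the origin interacts with profiles concentrating near $0$, and the restricted Sobolev equivalence range forces the threshold $\frac1{25}$.
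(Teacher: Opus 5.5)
Your outline is the concentration-compactness and rigidity argument of \cite{KillipNLS}, which is all the paper relies on here: it gives no proof of Theorem~\ref{thm:global} of its own, citing Theorems~1.2 and~2.11 of that work. Your explanation of the threshold is also right, since the forward bound $\norm{\nabla f}{L^{30/13}}\lesssim\norm{\cL_a^{1/2}f}{L^{30/13}}$ needs $(1+\sigma)/3<13/30$, i.e.\ $\sigma<3/10$, i.e.\ $a>-\frac14+\frac1{25}$.

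Two slips need fixing. First, the contraction norm should be $L^{10}_t\dot W^{1,30/13}_x$, because $(10/3,30/13)$ is not admissible. Second, for $a<0$ the potential does not enter the virial identity with a favorable sign. It contributes a positive multiple of $a\int|x|^{-2}|u|^2$ for the weight $|x|^2$, and of $a\int|x|^{-3}|u|^2$ for the Lin--Strauss weight $|x|$, and both are negative. So for $a<0$ rigidity should come from the truncated $|x|^2$-virial, where this term combines with the kinetic term into $Q_a(u)\geq(1+4a)\norm{\nabla u}{L^2}^2$ by the sharp Hardy inequality, with cutoff errors controlled by compactness. It should not come from a Lin--Strauss estimate, whose Hessian sees only angular derivatives and gives no direct control of that term.
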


For the free equation, the dispersive estimate
\[
 \norm{e^{it\Delta}f}{L^p}
 \lesssim |t|^{-3(\frac12-\frac1p)}\norm{f}{L^{p'}},
 \qquad 2\leq p\leq\infty,
\]
describes the spreading of a linear wave. Recovering this rate for a
critical nonlinear solution requires global spacetime information in
addition to scattering. Representative results for nonlinear
Schr\"odinger equations include
\cite{FanZhao2021,FanZhao2023,FanStaffilaniZhao,GuoHuangSong,
FanKillipVisanZhao}. Kowalski proved the full range $2<p\leq\infty$ for
the three-dimensional energy-critical equation without a potential
\cite{Kowalski}. For $a\geq0$, Wang, Xu, and Zhang established the
corresponding inverse-square estimate for every finite $p>2$
\cite[Theorem~1.2]{WangXuZhang}. Their final Sobolev step is not
available at $p=\infty$.

For $a\ge0$, we overcome this difficulty by using the endpoint Sobolev-Lorentz embedding $\dot W^{1,(3,1)}\hookrightarrow L^\infty$. Together with the adapted Riesz-transform bound and Lorentz interpolation of the linear dispersive estimate, this yields

\[
\|U_a(t)f\|_{L^\infty}
\lesssim_a |t|^{-1/2}
\|\mathcal L_a^{1/2}f\|_{L^{3/2,1}}.
\]

The corresponding nonlinear estimate and global Lorentz spacetime bounds then control the late Duhamel term, allowing the finite-interval bootstrap to close at $p=\infty$.

Attractive potentials have a different linear range. Put
\[
 \sigma=\frac12-\sqrt{\frac14+a}.
\]
If $-\frac14+\frac1{25}<a<0$, then $0<\sigma<\frac3{10}$. The
$\ell=0$ spherical-harmonic component of the kernel has the local
profile $|x|^{-\sigma}|y|^{-\sigma}$; see \cite[Eqs. (1.24), (1.27) and Section 6]{FanelliDecay}. The stationary wave-operator
theorem of Miao, Su, and Zheng \cite{MiaoSuZheng} consequently yields
strong unweighted linear decay only for $2\leq p<3/\sigma$. At
$p=3/\sigma$ the natural target is weak $L^p$.

The three decay statements proved in this article are collected below.
All control functions may be chosen nondecreasing in the displayed
energy norm.

\begin{theorem}
\label{thm:main}
Let $u$ be the global solution in Theorem~\ref{thm:global}.
\begin{enumerate}
 \item If $a\geq0$ and $u_0\in\dot H^1(\R^3)\cap L^1(\R^3)$, then
 \begin{equation}
  \norm{u(t)}{L^\infty}
  \leq C_a\bigl(\norm{u_0}{\dot H^1}\bigr)
  |t|^{-3/2}\norm{u_0}{L^1},
  \qquad t\neq0.
  \label{eq:nonnegative-endpoint}
 \end{equation}
 \item Suppose $-\frac14+\frac1{25}<a<0$ and define $\sigma$ as
 above.
 \begin{enumerate}
  \item If $2<p<3/\sigma$ and
  $u_0\in\dot H^1(\R^3)\cap L^{p'}(\R^3)$, then
  \begin{equation}
   \norm{u(t)}{L^p}
   \leq C_{a,p}\bigl(\norm{u_0}{\dot H^1}\bigr)
   |t|^{-3(\frac12-\frac1p)}\norm{u_0}{L^{p'}},
   \qquad t\neq0.
   \label{eq:attractive-strong}
  \end{equation}
  \item Let $p_*=3/\sigma$. If
  $u_0\in\dot H^1(\R^3)\cap L^{p_*',1}(\R^3)$, then
  \begin{equation}
   \norm{u(t)}{L^{p_*,\infty}}
   \leq C_{a,p_*}\bigl(\norm{u_0}{\dot H^1}\bigr)
   |t|^{-3(\frac12-\frac1{p_*})}
   \norm{u_0}{L^{p_*',1}},
   \qquad t\neq0.
   \label{eq:attractive-endpoint}
  \end{equation}
 \end{enumerate}
\end{enumerate}
\end{theorem}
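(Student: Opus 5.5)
We follow the Kowalski/Wang--Xu--Zhang scheme: a finite-interval bootstrap on the quantity
\[
 M(T)=\sup_{0<|t|\le T}|t|^{3(\frac12-\frac1p)}\norm{u(t)}{X}
\]
with $X=L^\infty$ in part 1, $L^p$ in part 2(a) and $L^{p_*,\infty}$ in part 2(b). The data norm $\norm{u_0}{Y}$ is taken in $Y=L^1$, $L^{p'}$ and $L^{p_*',1}$ respectively. By the scaling $u\mapsto\lambda^{1/2}u(\lambda^2t,\lambda x)$, the estimate is invariant, so we may normalize $\norm{u_0}{Y}=1$. We work for $t>0$; negative times follow by time reversal.

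\textbf{Linear input.} In every case we first record the linear estimate $\norm{U_a(t)f}{X}\lesssim|t|^{-3(\frac12-\frac1p)}\norm{f}{Y}$.
\begin{itemize}
 \item For $a\ge0$ the kernel is pointwise bounded by $|t|^{-3/2}$, which gives the $L^1\to L^\infty$ bound.
 \item For $a<0$ we use the wave-operator bounds of Miao--Su--Zheng for $p<3/\sigma$, and the local profile $|x|^{-\sigma}|y|^{-\sigma}$ (which lies in $L^{3/\sigma,\infty}$) to obtain the Lorentz endpoint.
\end{itemize}
Real interpolation of these bounds with the $L^2$ unitarity then gives the intermediate Lorentz versions.

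\textbf{Splitting the Duhamel term.} We write
\[
 u(t)=U_a(t)u_0-i\int_0^tU_a(t-s)|u|^4u(s)\,ds
\]
and split the integral into $[0,t/2]$, $[t/2,t-\delta t]$ and $[t-\delta t,t]$, or use a decomposition of $\R$ into finitely many intervals on which $\norm{u}{L^{10}_{t,x}}<\eta$. Such a decomposition exists by \eqref{eq:critical-bound}, with the number of intervals depending only on $\norm{u_0}{\dot H^1}$.

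On the early piece $s\le t/2$ we apply the linear estimate. The nonlinearity is then bounded in $Y$ via Hölder as $\norm{|u|^4u}{Y}\le\norm{u}{X}\norm{u}{L^{r}}^4$, with the $L^r$ factors controlled by energy and Strichartz norms. This produces a bound $\eta^{c}M(T)$ plus a constant term.

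On the late piece near $s=t$, the time singularity $|t-s|^{-3(\frac12-\frac1p)}$ is not integrable for $p$ large. For finite $p$ we follow Wang--Xu--Zhang and estimate it in $L^2\to$ Sobolev form, using the equivalence of Sobolev norms for $\cL_a$ (the Killip et al.\ Riesz transform bounds) in the admissible range.

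\textbf{The endpoint $p=\infty$.} This is the key step in part 1. We use the estimate stated in the introduction,
\[
 \norm{U_a(t)f}{L^\infty}\lesssim|t|^{-1/2}\norm{\cL_a^{1/2}f}{L^{3/2,1}},
\]
obtained from $\dot W^{1,(3,1)}\hookrightarrow L^\infty$, the Lorentz-space boundedness of $\nabla\cL_a^{-1/2}$ (valid for $a\ge0$), and Lorentz interpolation of the dispersive bound between $L^{3/2,1}$ and $L^{3,1}$. The singularity $|t-s|^{-1/2}$ is integrable, so the late term is bounded by
\[
 \int|t-s|^{-1/2}\norm{\cL_a^{1/2}(|u|^4u)}{L^{3/2,1}}\,ds .
\]
We then need a fractional chain rule in Lorentz spaces adapted to $\cL_a$, together with global Lorentz spacetime bounds such as $\cL_a^{1/2}u\in L^q_tL^{r,2}_x$, derived from Strichartz estimates in Lorentz form. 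Combining these, the late piece is bounded by $t^{-3/2}$ times $\eta^c M(T)+C$.

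\textbf{Part 2(b).} At $p_*$ the Hölder step cannot be done directly because $L^{p_*,\infty}$ is not a strong space. Here we use nonlinear real interpolation on bounded energy sets. On a fixed energy ball, the solution map at time $t$ is Lipschitz-type from $L^{p_0'}$ to $L^{p_0}$ and from $L^{p_1'}$ to $L^{p_1}$, with $p_0<p_*<p_1$; for $p_1>p_*$ we use weighted or weak bounds. By a Tartar-type nonlinear interpolation theorem this yields the $(p_*',1)\to(p_*,\infty)$ estimate. The Lipschitz (difference) estimates are obtained by rerunning the bootstrap for the linearized difference equation, using the continuous dependence in Theorem~\ref{thm:global}.

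\textbf{Closing the bootstrap.} In each case we obtain $M(T)\le C+C\eta^cM(T)$. Choosing $\eta$ small gives $M(T)\le C$ uniformly in $T$. The finiteness of $M(T)$ for each fixed $T$ is ensured by first approximating with nicer data and then using continuous dependence. The main obstacle is the late-time nonlinear estimate at the endpoints: the Lorentz chain rule for $\cL_a^{1/2}$ in part 1, and the difference estimates needed for the nonlinear interpolation in part 2(b).
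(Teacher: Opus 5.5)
Your proposal has three genuine gaps. Two are decisive, and a third concerns the low end of the strong range.

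\textbf{The early Duhamel piece.} This step fails whenever $\beta=3(\frac12-\frac1p)\ge1$. That covers $6\le p<p_*$, the endpoint $p_*$ (where $\beta_*=\frac32-\sigma>1$), and $p=\infty$ in part~1, which is the main new result. You bound $\norm{|u|^4u}{Y}\le\norm{u}{X}\norm{u}{L^r}^4$ and insert $\norm{u(s)}{X}\le s^{-\beta}M(T)$. This leaves $\int_0^{t/2}s^{-\beta}\norm{u(s)}{L^r}^4\,ds$, which diverges at $s=0$ because $\norm{u(s)}{L^r}\to\norm{u_0}{L^r}\neq0$. No choice of $\eta$ repairs this.

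The paper keeps the bootstrap quantity out of this piece. It proves $\int_0^\infty\norm{(|u|^4u)(s)}{Y}\,ds\le C(E)\norm{u_0}{Y}$ from decay at a \emph{lower} exponent, whose rate becomes integrable after taking a fractional power:
\begin{itemize}
\item For $a\ge0$: $\norm{|u|^4u}{L^1}\le\norm{u}{L^3}^{5/3}\norm{u}{L^{15/2}}^{10/3}$, together with the Wang--Xu--Zhang $L^{3/2}\to L^3$ decay and $\norm{u_0}{L^{3/2}}\lesssim A^{3/5}E^{2/5}$. This gives $s^{-5/6}$.
\item For $a<0$: the $L^{7/5}\to L^{7/2}$ decay of Lemma~\ref{lem:difference} raised to the power $\alpha_p$, giving $s^{-d_p}$ with $d_p<1$, together with $\norm{u_0}{L^{7/5}}^{\alpha_p}\le C(E)A$.
\end{itemize}
In both cases Lorentz--H\"older in time against global Lorentz spacetime norms finishes the bound. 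Your scaling normalization $\norm{u_0}{Y}=1$ would reduce this to proving $\int_0^\infty\norm{|u|^4u}{Y}\,ds\le C(E)$, but that estimate still has to be supplied, and your sketch does not.

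\textbf{Part~2(b) by nonlinear interpolation.} This cannot work, because no upper endpoint $p_1>p_*$ exists. The datum $f=|x|^{-\sigma}e^{-|x|^2/4}$ of Proposition~\ref{prop:sharpness} lies in $L^{p_1'}$. Yet $U_a(t)f$ behaves like $c_a(t)|x|^{-\sigma}$ near the origin, so it lies in no $L^{p_1,q}$, not even $L^{p_1,\infty}$. Moreover, endpoints on both sides of $p_*$ would interpolate to the strong $L^{p_*'}\to L^{p_*}$ bound, which that proposition excludes. Weighted bounds would interpolate only to weighted spaces, not to $L^{p_*,\infty}$.

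The paper instead proves \eqref{eq:attractive-endpoint} directly, by the same finite-interval bootstrap as for $6\le p<p_*$ but run in the associate norm of $L^{p_*,\infty}$. The late piece uses three ingredients:
\begin{itemize}
\item $\cL_a^{-s_*/2}:L^{4,1}\to L^{p_*,\infty}$;
\item the $L^{4/3,1}\to L^{4,1}$ decay, which gives the kernel $(t-s)^{-3/4}$;
\item $\norm{\cL_a^{s_*/2}(|u|^4u)}{L^{4/3,1}}\lesssim\norm{u}{L^{p_*,\infty}}\norm{\cL_a^{1/2}u}{L^{24/11,4}}^4$.
\end{itemize}
So Lorentz H\"older handles the weak factor without trouble; that is not the real obstruction.

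\textbf{The low end of the strong range.} Nonlinear interpolation is needed at the opposite end of the range. For $2<p\le6/(3-4\sigma)$, the exponent $4p/(p-2)$ in your H\"older split is at least $3/\sigma$. That lies outside the adapted Sobolev range, so the energy and Strichartz norms you invoke do not control $\norm{u}{L^{4p/(p-2)}}$. The paper covers $2<p<7/2$ by Tartar's $K$-functional argument, interpolating between the $L^2$ Lipschitz bound and the $L^{7/5}\to L^{7/2}$ Lipschitz decay on energy balls.

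Your late-time step for part~1, via $\norm{U_a(t)f}{L^\infty}\lesssim|t|^{-1/2}\norm{\cL_a^{1/2}f}{L^{3/2,1}}$ and the kernel $(t-s)^{-1/2}$, does agree with the paper.
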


Combining part (1) with \cite[Theorem~1.2]{WangXuZhang} gives the
strong decay estimate for $a\geq0$ and every $2<p\leq\infty$. For an
attractive potential the upper strong endpoint is sharp already for
the linear flow; see Proposition~\ref{prop:sharpness}. The Lorentz
estimate \eqref{eq:attractive-endpoint} is therefore the limiting
unweighted statement.

 The proof combines global Lorentz spacetime bounds with the interval bootstrap of \cite[Section~3]{Kowalski} and \cite[Section~4]{WangXuZhang}. For \(a<0\), we remove the lower restriction on \(p\) imposed by the adapted Sobolev estimates through Lipschitz decay from \(L^{7/5}\) to \(L^{7/2}\) and nonlinear real interpolation on bounded energy sets. For \(6\le p<3/\sigma\), the \(L^{7/2}\) decay controls the early Duhamel integral, while fractional Sobolev and product estimates control the late integral with time kernel \((t-s)^{-3/4}\). At \(p=3/\sigma\), a weak-type fractional integral estimate permits the same bootstrap in \(L^{p,\infty}\). For \(a\ge0\), the endpoint Sobolev-Lorentz embedding \(\dot W^{1,(3,1)}\hookrightarrow L^\infty\) supplies the missing Sobolev step and yields decay at \(p=\infty\).

The paper is organized as follows. Section 2 collects the linear dispersive and Strichartz estimates, adapted Sobolev inequalities, and global Lorentz spacetime bounds. Section 3 establishes the difference estimates used in nonlinear interpolation, the fractional and endpoint Sobolev-Lorentz estimates, and the early-time bounds and local integrability needed for the bootstrap. It also proves sharpness of the strong range for \(a<0\). Section 4 proves Theorem 1.2 by finite-interval iteration, treating the strong estimates and the two endpoints separately, and then extends the almost-everywhere decay bounds to every \(t\ne0\).

\section{Preliminaries}

We write $X\lesssim Y$ when $X\leq CY$ and when the constant $C$ depends on a parameter $b$, we write $X\lesssim_b Y$. The exponent $p'$ is the
H\"older conjugate of $p$. A pair $(q,r)$ is Schr\"odinger-admissible
in dimension three if
\[
 2\leq q,r\leq\infty,
 \qquad \frac2q+\frac3r=\frac32.
\]
The notation $L^{p,s}$ refers to a Lorentz space; $L^{p,p}=L^p$ with
equivalent norms, and $L^{p,s_1}\hookrightarrow L^{p,s_2}$ when
$s_1\leq s_2$. We utilize the notation $L^q_t L^r_x$ to denote the space-time norm.

We use Lorentz H\"older and Young--O'Neil convolution in their standard
forms
\[
 \norm{fg}{L^{r,s}}
 \lesssim\norm{f}{L^{r_1,s_1}}\norm{g}{L^{r_2,s_2}},
 \qquad
 \norm{f*g}{L^{r,s}}
 \lesssim\norm{f}{L^{r_1,s_1}}\norm{g}{L^{r_2,s_2}},
\]
whenever the primary indices satisfy, respectively,
$1/r=1/r_1+1/r_2$ and $1+1/r=1/r_1+1/r_2$, and the secondary indices
satisfy $1/s\leq1/s_1+1/s_2$; see
\cite{ONeil}. We also use the real-interpolation
identity
\[
 (L^{r_0},L^{r_1})_{\theta,s}=L^{r,s},
 \qquad \frac1r=\frac{1-\theta}{r_0}+\frac\theta{r_1},
\]
from \cite[Chapter~5]{BerghLofstrom}; the Lorentz form of operator
interpolation goes back to Hunt \cite{Hunt}. In one time dimension,
$|t|^{-1/q}\in L^{q,\infty}$ for $0<q<\infty$.

For $1<p<\infty$, the associate norm
\[
 \norm{f}{(p,\infty)}
 =\sup_{\norm{g}{L^{p',1}}\leq1}
   \left|\int_{\R^3}f(x)\overline{g(x)}\,dx\right|
\]
is equivalent to the usual weak-$L^p$ quasi-norm. Moreover,
$(L^{p',1})^*=L^{p,\infty}$ and $L^{p',1}$ is separable
\cite[Chapter~IV]{BennettSharpley}. Every weak endpoint norm in the
proof is understood in this associate-norm form, in particular in the
Volterra estimates and in the passage from almost every time to a
fixed time.

\subsection{Linear and Strichartz estimates}

\begin{lemma}[Linear decay]
\label{lem:linear}
Let $t\neq0$.
\begin{enumerate}
 \item If $a\geq0$, then, for $2\leq r\leq\infty$,
 \begin{equation}
  \norm{U_a(t)f}{L^r}
  \lesssim_{a,r}|t|^{-3(\frac12-\frac1r)}\norm{f}{L^{r'}}.
  \label{eq:linear-nonnegative}
 \end{equation}
 \item If $-\frac14<a<0$ and
 $\sigma=\frac12-\sqrt{\frac14+a}$, then
 \begin{equation}
  \norm{U_a(t)f}{L^r}
  \lesssim_{a,r}|t|^{-\beta_r}\norm{f}{L^{r'}},
  \qquad \beta_r=3\left(\frac12-\frac1r\right),
  \quad 2\leq r<\frac3\sigma.
  \label{eq:linear-attractive}
 \end{equation}
 In addition,
 \[
  \norm{U_a(t)f}{L^{4,1}}
  \lesssim_a |t|^{-3/4}\norm{f}{L^{4/3,1}}.
 \]
 \item If $-\frac14<a<0$, set
 $\sigma=\frac12-\sqrt{\frac14+a}$ and $p_*=3/\sigma$. Then
 \begin{equation}
  \norm{U_a(t)f}{L^{p_*,\infty}}
  \lesssim_{a,p_*}|t|^{-\beta_{p_*}}
  \norm{f}{L^{p_*',1}}.
  \label{eq:linear-weak-endpoint}
 \end{equation}
\end{enumerate}
\end{lemma}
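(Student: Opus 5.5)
This lemma is essentially linear and collects known results, so the plan is to cite the sharp linear estimates and recover the rest by interpolation.

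\emph{Part (1).} For $a\ge0$ we invoke the known $L^1\to L^\infty$ dispersive bound $\norm{U_a(t)f}{L^\infty}\lesssim_a|t|^{-3/2}\norm{f}{L^1}$. This is available for inverse-square potentials in three dimensions with $a\ge0$, for instance from the kernel bounds of Fanelli et al.\ \cite{FanelliDecay}, or from the wave-operator results of \cite{MiaoSuZheng}. Unitarity gives $\norm{U_a(t)f}{L^2}=\norm{f}{L^2}$. Riesz--Thorin interpolation between these two endpoints then yields \eqref{eq:linear-nonnegative} for every $2\le r\le\infty$.

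\emph{Part (2).} For $-\frac14<a<0$ the $L^1\to L^\infty$ bound fails, because the $\ell=0$ kernel behaves like $|x|^{-\sigma}|y|^{-\sigma}$. Instead we use the stationary wave-operator theorem of \cite{MiaoSuZheng}. It gives boundedness of the wave operators $W_\pm$ and of their adjoints on $L^q$ exactly in the range $\frac3{3-\sigma}<q<\frac3\sigma$. Through the intertwining identity
\[
U_a(t)P_{ac}=W_\pm e^{it\Delta}W_\pm^*,
\]
where there are no eigenvalues since $\cL_a\ge0$, we can transfer the free estimate $e^{it\Delta}:L^{r'}\to L^r$ at rate $|t|^{-\beta_r}$. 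This requires $W_\pm$ bounded on $L^r$ and $W_\pm^*$ bounded on $L^{r'}$. The condition $r<3/\sigma$ is equivalent to $r'>\frac3{3-\sigma}$, so both requirements hold for $2\le r<3/\sigma$. If instead one prefers to take the strong estimate \eqref{eq:linear-attractive} directly from \cite{MiaoSuZheng}, this step is a citation.

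For the $L^{4,1}$ statement, note that $4<3/\sigma$ because $\sigma<3/4$. We choose $2<r_0<4<r_1<3/\sigma$ and apply the real method $(\cdot,\cdot)_{\theta,1}$ to the strong bounds at $r_0$ and $r_1$, with $\theta$ determined by $\frac14=\frac{1-\theta}{r_0}+\frac\theta{r_1}$. This is where the decay rate needs checking. The rate $\beta_r$ is affine in $1/r$, and the source exponents $r_0',r_1'$ interpolate to $(4/3,1)$ with the same $\theta$. The interpolated operator norm is therefore at most
\[
C\,(|t|^{-\beta_{r_0}})^{1-\theta}(|t|^{-\beta_{r_1}})^\theta=C|t|^{-3/4},
\]
and the identity $(L^{r_0'},L^{r_1'})_{\theta,1}=L^{4/3,1}$ (with the analogous identity on the target side) gives the claim.

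\emph{Part (3).} The weak endpoint cannot be obtained by interpolating strong estimates, since $r=3/\sigma$ is the upper edge of the strong range. One route is to go back to the kernel. Following \cite[Section~6]{FanelliDecay}, decompose in spherical harmonics. The components with $\ell\ge1$ have effective parameter $\sigma_\ell<0$ and obey the full dispersive range. The $\ell=0$ component has kernel bounded by
\[
|t|^{-3/2}\,w(x/\sqrt{|t|})\,w(y/\sqrt{|t|}),\qquad w(z)=1+|z|^{-\sigma}.
\]
Since $|x|^{-\sigma}\in L^{3/\sigma,\infty}$ and $|y|^{-\sigma}\in L^{(3/\sigma)',\infty}$ is paired against $f\in L^{p_*',1}$, Lorentz H\"older and scaling produce exactly $|t|^{-\beta_{p_*}}$ with $L^{p_*',1}\to L^{p_*,\infty}$.

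Alternatively, and more simply, one can use a restricted weak-type version of the wave-operator bound: $W_\pm:L^{p_*,1}\to L^{p_*,\infty}$, together with its dual $W_\pm^*:L^{p_*',1}\to L^{p_*',\infty}$. The dual version is not enough by itself, because $e^{it\Delta}$ maps only $L^{p_*',\infty}\to L^{p_*,\infty}$ at a Lorentz level, which needs care. The cleanest composition is $W_\pm^*:L^{p_*',1}\to L^{p_*',1}$ (strong, since $p_*'$ lies in the interior range), followed by $e^{it\Delta}:L^{p_*',1}\to L^{p_*,1}$ (by interpolation of the free estimate), followed by $W_\pm:L^{p_*,1}\to L^{p_*,\infty}$. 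This last endpoint bound is the main obstacle. I would try to extract it from the explicit structure of $W_\pm$ in \cite{MiaoSuZheng}, where the bad part is a rank-one-type operator $f\mapsto|x|^{-\sigma}\langle f,\phi\rangle$-like term in the radial sector, which is visibly $L^{p_*,1}\to L^{p_*,\infty}$ bounded, failing to be strong-type only logarithmically. If that is not available, I would fall back on the kernel route above.
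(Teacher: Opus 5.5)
Parts (1) and (2) are fine and match the paper, which cites \cite{FanelliDecay} and \cite{MiaoSuZheng} and gets the $L^{4/3,1}\to L^{4,1}$ bound by real interpolation between $r=3$ and $r=5$, as you do. You should drop the suggestion that the wave operators could supply the $r=\infty$ endpoint in part (1): their $L^p$ bounds are available only for $1<p<\infty$.

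The gap is in part (3). Your kernel route has the same structure as the paper's ($\ell=0$ versus $\ell\geq1$), but the $\ell=0$ bound you use, $|t|^{-3/2}w(x/\sqrt{|t|})\,w(y/\sqrt{|t|})$ with $w(z)=1+|z|^{-\sigma}$, does not imply the claim. After expanding the product, Lorentz H\"older handles only the term $|t|^{-3/2+\sigma}|x|^{-\sigma}|y|^{-\sigma}$. The terms produced by the floor $1$ in $w$ fail:
\begin{itemize}
\item $f\mapsto|t|^{-3/2}\int|f|$ and $f\mapsto|t|^{-3/2+\sigma/2}\int|y|^{-\sigma}|f(y)|\,dy$ have outputs constant in $x$, hence not in $L^{p_*,\infty}$;
\item $|t|^{-3/2+\sigma/2}|x|^{-\sigma}\int|f|$ requires $f\in L^1$, which $L^{p_*',1}$ does not give.
\end{itemize}

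The missing ingredient is that the $\ell=0$ kernel has no bounded part, because the Bessel factor decays. In the paper,
\[
|U_a(t)P_0f(x)|\lesssim_a|t|^{-3/2}\int|j_{-\sigma}(|x||y|/(2|t|))|\,|P_0f(y)|\,dy,
\qquad j_{-\sigma}(r)=r^{-1/2}J_{\nu_0}(r),\quad \nu_0=\tfrac12-\sigma.
\]
The bounds $|J_{\nu_0}(r)|\lesssim r^{\nu_0}$ for $r\le1$ and $|J_{\nu_0}(r)|\lesssim r^{-1/2}$ for $r\ge1$ give $|j_{-\sigma}(r)|\lesssim r^{-\sigma}$ for every $r>0$; for $r\geq1$ use $r^{-1}\le r^{-\sigma}$, since $\sigma<\frac12$. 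Hence
\[
|U_a(t)P_0f(x)|\lesssim_a|t|^{-3/2+\sigma}|x|^{-\sigma}\int_{\R^3}|y|^{-\sigma}|P_0f(y)|\,dy .
\]
Since $|y|^{-\sigma}\in L^{p_*,\infty}=(L^{p_*',1})^*$ (not $L^{(3/\sigma)',\infty}$, as you wrote), this gives the $L^{p_*',1}\to L^{p_*,\infty}$ bound at rate $|t|^{-\beta_{p_*}}$.

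For $\ell\geq1$, "each mode disperses" is not quite enough. You need a uniform $L^1\to L^\infty$ bound for the whole sum over $\ell\geq1$ (the bound on $S_2$ in \cite{FanelliDecay}). Interpolating it with $L^2$ unitarity at second index $1$ then gives $L^{p_*',1}\to L^{p_*,1}$.

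Your fallback wave-operator route does not rescue part (3). Besides the endpoint bound $W_\pm:L^{p_*,1}\to L^{p_*,\infty}$ that you flag, its first step is also an endpoint claim. The exponent $p_*'=3/(3-\sigma)$ is the lower edge of the range $\frac3{3-\sigma}<q<\frac3\sigma$, not an interior point. So $W_\pm^*:L^{p_*',1}\to L^{p_*',1}$ (dual to boundedness of $W_\pm$ on $L^{p_*,\infty}$) is not covered by the Lebesgue-space theorem in \cite{MiaoSuZheng}. The kernel route with the corrected pure-power bound is the one that closes.
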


\begin{proof}
For $a\geq0$, see
\cite[Theorem~1.11(i)]{FanelliDecay}; In the attractive case, estimate (2.2) is the three-dimensional specialization of \cite[Corollary~1.7]{MiaoSuZheng}, with its exponent $p$ replaced by $r'$. The $L^{4/3,1}\to L^{4,1}$ estimate follows by real interpolation between (2.2) at $r=3$ and $r=5$.

For the endpoint, set \(p_*=3/\sigma\), let \(P_0\) denote spherical averaging, and write \(Q=I-P_0\). We first consider \(f\in C_c^\infty(\mathbb R^3\setminus\{0\})\).

For the \(\ell=0\) component, the representation formula in \cite[Theorem~1.3 and Eqs.~(1.16), (1.27)]{FanelliDecay} gives
\[
|U_a(t)P_0f(x)|
\lesssim_a |t|^{-3/2}
\int_{\mathbb R^3}
\left|j_{-\sigma}\left(\frac{|x||y|}{2|t|}\right)\right|
|P_0f(y)|\,dy.
\]

Here
\[
j_{-\sigma}(r)=r^{-1/2}J_{\nu_0}(r),
\qquad
\nu_0=\sqrt{\frac14+a}=\frac12-\sigma.
\]

The standard Bessel estimates
\[
|J_{\nu_0}(r)|\lesssim_a
\begin{cases}
	r^{\nu_0},&0<r\le1,\\
	r^{-1/2},&r\ge1,
\end{cases}
\]

imply
\[
|j_{-\sigma}(r)|
\lesssim_a
\begin{cases}
	r^{-\sigma},&0<r\le1,\\
	r^{-1},&r\ge1,
\end{cases}
\lesssim_a r^{-\sigma},
\]

since \(0<\sigma<1/2\). Consequently,
\[
|U_a(t)P_0f(x)|
\lesssim_a
|t|^{-3/2+\sigma}|x|^{-\sigma}
\int_{\mathbb R^3}|y|^{-\sigma}|P_0f(y)|\,dy.
\]

Since \(|x|^{-\sigma}\in L^{p_*,\infty}\), Lorentz Hölder’s inequality and the boundedness of \(P_0\) on Lorentz spaces yield
\[
\|U_a(t)P_0f\|_{L^{p_*,\infty}}
\lesssim_a
|t|^{-3/2+\sigma}\|f\|_{L^{p_*',1}}
=
|t|^{-\beta_{p_*}}\|f\|_{L^{p_*',1}},
\]

where
\[
\beta_{p_*}=\frac32-\sigma
=3\left(\frac12-\frac1{p_*}\right).
\]

It remains to estimate the higher angular modes. In the notation of \cite[Eqs.~(6.5), (6.6)]{FanelliDecay},
\[
a_\ell=\frac12-\sqrt{\left(\ell+\frac12\right)^2+a}.
\]

Here \(a_0=\sigma>0\), whereas \(a_\ell<0\) for every \(\ell\ge1\). Thus the term \(S_2\) is precisely the kernel corresponding to \(U_a(t)Q\). Its uniform bound \cite[Eq.~(6.16)]{FanelliDecay} and the representation formula give
\[
\|U_a(t)Qf\|_{L^\infty}
\lesssim_a |t|^{-3/2}\|f\|_{L^1}.
\]

Moreover, \(Q\) is an orthogonal projection on \(L^2\), while \(U_a(t)\) is unitary on \(L^2\); hence
\[
\|U_a(t)Qf\|_{L^2}\le \|f\|_{L^2}.
\]

Real interpolation of these two estimates with parameter \(\theta=2/p_*\) and the second Lorentz index \(1\) gives
\[
U_a(t)Q:
(L^1,L^2)_{\theta,1}
\longrightarrow
(L^\infty,L^2)_{\theta,1},
\]

then,
\[
\|U_a(t)Qf\|_{L^{p_*,1}}
\lesssim_a |t|^{-\beta_{p_*}}
\|f\|_{L^{p_*',1}}.
\]

Using \(L^{p_*,1}\hookrightarrow L^{p_*,\infty}\) and combining the estimates for \(P_0f\) and \(Qf\), we conclude that
\[
\|U_a(t)f\|_{L^{p_*,\infty}}
\lesssim_a |t|^{-\beta_{p_*}}
\|f\|_{L^{p_*',1}}.
\]

The general case follows by density.
\end{proof}

\begin{lemma}[Strichartz estimates]
\label{lem:strichartz}
	Let \(a>-\frac14\), and let \(I\subset\mathbb R\) be an interval. For any Schrödinger-admissible pairs \((q,r)\) and \((\widetilde q,\widetilde r)\),
	
	$$
	\|U_a(t)f\|_{L_t^qL_x^r(\mathbb R\times\mathbb R^3)}
	\lesssim \|f\|_{L_x^2},
	$$
	
	and, for every \(t_0\in\mathbb R\),
	
	$$
	\left\|\int_{t_0}^t U_a(t-s)F(s)\,ds\right\|_{L_t^qL_x^r(I\times\mathbb R^3)}
	\lesssim
	\|F\|_{L_t^{\widetilde q'}L_x^{\widetilde r'}(I\times\mathbb R^3)} .
	$$
	
	If \(-\frac14<a<0\) and \((q,r)\) is nonendpoint admissible, then
	
	$$
	\|U_a(t)f\|_{L_t^{q,2}L_x^{r,2}}
	\lesssim_a \|f\|_{L_x^2}.
	$$
	
	Moreover, for \(2<\vartheta\le q\),
\begin{equation}
	\left\|\int_{t_0}^t U_a(t-s)F(s)\,ds\right\|_{
		L_t^{q,\vartheta}L_x^{r,2}(I\times\R^3)}
	\lesssim_{a,q,r,\vartheta}
	\norm{F}{L_t^2L_x^{6/5}(I\times\R^3)}.
	\label{eq:retarded-lorentz}
\end{equation}
	The implicit constants are independent of \(I\) and \(t_0\).
\end{lemma}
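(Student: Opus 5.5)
The plan is to obtain the Lebesgue Strichartz estimates from the abstract Keel--Tao machinery, and then upgrade them to Lorentz form by real interpolation, in the spirit of Lemma~\ref{lem:linear}.

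\textbf{Lebesgue estimates.} The Keel--Tao theorem requires two inputs for $U_a(t)$: an energy estimate $\norm{U_a(t)f}{L^2}=\norm{f}{L^2}$, which holds because $\cL_a$ is self-adjoint, and an untruncated $L^1\to L^\infty$ decay bound. For $a\ge0$ the decay bound is \eqref{eq:linear-nonnegative} with $r=\infty$, and Keel--Tao then gives the full range including the endpoint $(2,6)$. For $-\frac14<a<0$ the bound fails at $r=\infty$, so I will not use pointwise dispersion there. Instead I will invoke the known Strichartz estimates for $\cL_a$ for all $a>-\frac14$ (Burq--Planchon--Stalker--Tahvildar-Zadeh; Bouclet--Mizutani at the endpoint), which rest on local smoothing/Kato smoothing for $|x|^{-1}$ together with the free estimates. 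The inhomogeneous estimate on $I$ with arbitrary $t_0$ then follows from the $TT^*$ argument and the Christ--Kiselev lemma, and Keel--Tao handles the double endpoint. The constants do not depend on $I$ or $t_0$, because we may extend $F$ by zero outside $I$ and the retarded truncation is uniform.

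\textbf{Lorentz refinement for $a<0$.} I will fix a nonendpoint pair $(q,r)$ with $2<r<6$ and choose admissible pairs $(q_0,r_0)$ and $(q_1,r_1)$ on either side of it with $\frac1q=\frac{1-\theta}{q_0}+\frac\theta{q_1}$. The map $T:f\mapsto U_a(t)f$ is bounded from $L^2$ into both $L^{q_0}_tL^{r_0}_x$ and $L^{q_1}_tL^{r_1}_x$. Because the domain is the same space at both ends, I will use the $TT^*$ form instead of interpolating $T$ directly. The estimate $\norm{U_a(t)f}{L^{q,2}_tL^{r,2}_x}\lesssim\norm{f}{L^2}$ is equivalent to boundedness of
\[
TT^*F(t)=\int U_a(t-s)F(s)\,ds
\]
from $L^{q',2}_tL^{r',2}_x$ to $L^{q,2}_tL^{r,2}_x$. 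To prove that, I will interpolate bilinearly, or equivalently use the Keel--Tao decomposition into dyadic time pieces $|t-s|\sim2^j$. Each piece obeys $L^{r'}\to L^r$ bounds via \eqref{eq:linear-attractive}, since $r<6<3/\sigma$ because $\sigma<\frac3{10}$ in the relevant range; for general $a<0$, note $3/\sigma>6$ always. Summing the dyadic pieces by real interpolation in the Lorentz scale (the bilinear interpolation in Keel--Tao Section~6 produces $\ell^2$ sums, which give exactly Lorentz second index $2$) yields $L^{q,2}_tL^{r,2}_x$. The space component $L^{r,2}$ arises from real interpolation of $L^{r_0}$ and $L^{r_1}$ with index $2$. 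The time component arises because the time kernel $|t-s|^{-2/q}$ maps $L^{q',2}$ to $L^{q,2}$ by one-dimensional Young--O'Neil, using $|t|^{-2/q}\in L^{q/2,\infty}$.

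\textbf{Retarded estimate \eqref{eq:retarded-lorentz}.} First, the Lebesgue inhomogeneous estimate with the dual endpoint $(\widetilde q,\widetilde r)=(2,6)$ gives $L^2_tL^{6/5}_x\to L^{q_i}_tL^{r_i}_x$ for $i=0,1$. Second, I will write the Duhamel integral as $T\bigl(\int_{s<t}U_a(-s)F(s)\,ds\bigr)$ and use the Christ--Kiselev lemma in its Lorentz version. That lemma requires target time exponent strictly greater than the source exponent $2$, which is why the hypothesis reads $\vartheta>2$. Combining the homogeneous $L^{q,2}_tL^{r,2}_x$ bound with $L^{q,2}\hookrightarrow L^{q,\vartheta}$ then gives the claim for $2<\vartheta\le q$, with constants uniform in $I$ and $t_0$.

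\textbf{Main obstacle.} The hard step is the Christ--Kiselev argument in mixed Lorentz norms. It needs the target quasi-norm $L^{q,\vartheta}_t(L^{r,2}_x)$ to be equivalent to a norm satisfying the dyadic $\ell^\vartheta$-type additivity used in the Whitney decomposition. I would first try a direct route: real-interpolate the two Lebesgue retarded estimates $L^2_tL^{6/5}_x\to L^{q_i}_tL^{r_i}_x$ with index $\vartheta$, using the identity $(L^{q_0}(L^{r_0}),L^{q_1}(L^{r_1}))_{\theta,\vartheta}\hookrightarrow L^{q,\vartheta}_t(L^{r,\vartheta}_x)$. This falls short of $L^{r,2}_x$ in space, so I would finish by checking the Lorentz Christ--Kiselev lemma for the Banach norm equivalent to $L^{q,\vartheta}$ when $\vartheta>2$.
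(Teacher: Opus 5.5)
Your proposal is correct. For the retarded estimate \eqref{eq:retarded-lorentz} it is the paper's argument. The untruncated operator $U_a(t)\int_{\R}U_a(-s)F(s)\,ds$ is controlled by the homogeneous Lorentz bound composed with the dual endpoint estimate $L^2_tL^{6/5}_x\to L^2_x$. Time ordering is then restored by a Lorentz Christ--Kiselev lemma, which the paper quotes as Proposition~2.1 of \cite{AhnCho}. The step you flag as the main obstacle closes as you suspect. With the distribution-function quasinorm and $\vartheta\le q$, time-disjoint pieces satisfy $\|\sum_kG_k\|^{\vartheta}\le\sum_k\|G_k\|^{\vartheta}$ in $L^{q,\vartheta}_t(L^{r,2}_x)$, because distribution functions add and $\vartheta/q\le1$. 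Hence the $j$-th Whitney level costs a factor $2^{j(1/\vartheta-1/2)}$, and the levels can be summed, using normability, precisely when $\vartheta>2$. Your fallback of interpolating the Lebesgue retarded estimates is not needed.

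Where you genuinely depart from the paper is the homogeneous bound $\|U_a(t)f\|_{L^{q,2}_tL^{r,2}_x}\lesssim\|f\|_{L^2}$. The paper transfers Kowalski's free Lorentz Strichartz estimate \cite[Proposition~2.8]{Kowalski} through the intertwining $U_a(t)=W_\pm e^{it\Delta}W_\pm^*$. For this it uses the Miao--Su--Zheng $L^p$ bounds for $W_\pm$ and $W_\pm^*$ \cite{MiaoSuZheng}, interpolated to $L^{s,2}$ for $6/5\le s\le 6$. You instead prove the bound intrinsically by $TT^*$, and the two ingredients you name at the end of that paragraph suffice without any dyadic or bilinear decomposition:
\begin{enumerate}
\item Real interpolation of \eqref{eq:linear-attractive} at exponents $r_0<r<r_1<3/\sigma$ with second index $2$ gives $\|U_a(t)\|_{L^{r',2}\to L^{r,2}}\lesssim|t|^{-2/q}$. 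This is exactly how the paper obtains its $L^{4/3,1}\to L^{4,1}$ bound.
\item Normability of $L^{r,2}$ lets you move the norm inside the $s$-integral.
\item Young--O'Neil with $|t|^{-2/q}\in L^{q/2,\infty}$, $q>2$, maps $L^{q',2}_t$ to $L^{q,2}_t$.
\end{enumerate}
Your route uses only the linear dispersive bound and reproves the free case along the way. The paper's route is two lines once the wave-operator theorem and Kowalski's estimate are granted.

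One clause needs correcting. For $a<0$ the Keel--Tao theorem cannot be quoted for the double-endpoint inhomogeneous estimate, because its hypothesis is exactly the $L^1\to L^\infty$ decay you rightly discard. Take that case from \cite{BoucletMizutani}, as the paper does. Alternatively, rerun the Keel--Tao bilinear argument with the $L^{r_1'}\to L^{r_1}$ decay of \eqref{eq:linear-attractive}, for some $6<r_1<3/\sigma$, in place of $L^1\to L^\infty$.
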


\begin{proof}
	The Lebesgue-space estimates, including the two endpoint, follow from
	\cite[Theorem~2.11 and Remark~2.13]{BoucletMizutani}; the hypotheses there hold for
	\(V(x)=a|x|^{-2}\) by Hardy's inequality.
	
	Assume $-\frac14<a<0$ and set $p_0=3/\sigma$. By
	\cite[Theorem~1.1]{MiaoSuZheng}, $W_\pm$ and $W_\pm^*$ are
	bounded on $L^p$ for $p_0'<p<p_0$. Since
	\[
	p_0'=\frac3{3-\sigma}<\frac65,\qquad p_0=\frac3\sigma>6,
	\]
	real interpolation gives their boundedness on $L^{s,2}$ for
	$6/5\le s\le6$.
	
	For every nonendpoint admissible pair $(q,r)$, Kowalski's estimate
	\cite[Proposition~2.8]{Kowalski} states that
	\[
	\|e^{it\Delta}g\|_{L_t^{q,2}L_x^{r,2}}
	\lesssim_{q,r}\|g\|_2,
	\qquad \frac2q+\frac3r=\frac32.
	\]
	Using $U_a(t)=W_\pm e^{it\Delta}W_\pm^*$ and the $L^2$-unitarity
	of $W_\pm^*$, we obtain
	\[
	\|U_a(t)f\|_{L_t^{q,2}L_x^{r,2}}
	\lesssim_{a,r}
	\|e^{it\Delta}W_\pm^*f\|_{L_t^{q,2}L_x^{r,2}}
	\lesssim_{q,r}\|f\|_2.
	\]
	
	To prove \eqref{eq:retarded-lorentz}, combine this estimate with the endpoint estimate and its dual:
	
	$$
	\begin{aligned}
		\left\|\int_{\mathbb R}U_a(t-s)F(s)\,ds\right\|_{L_t^{q,2}L_x^{r,2}}
		&\lesssim_a
		\left\|\int_{\mathbb R}U_a(-s)F(s)\,ds\right\|_{L_x^2}  \\
		&\lesssim_a \|F\|_{L_t^2L_x^{6/5}} .
	\end{aligned}
	$$
	
	Since \(L^{q,2}\hookrightarrow L^{q,\vartheta}\) for \(\vartheta\ge2\), Proposition~2.1 of \cite{AhnCho} converts the full integral into the time-ordered integral whenever \(2<\vartheta\le q\). Zero extension and time reversal give the estimate on any interval \(I\) and for any \(t_0\). A standard density argument completes the proof.
\end{proof}

\subsection{Adapted Sobolev and global spacetime estimates}

The following is the three-dimensional specialization of
\cite[Theorem~1.2]{KillipSobolev}. We state only the ranges used in
this paper.

\begin{lemma}[Adapted Sobolev estimates]
\label{lem:adapted-sobolev}
Let $a>-\frac14$, set
$\sigma=\frac12-\sqrt{\frac14+a}$, and let $0<s<2$. For
$f\in C_c^\infty(\R^3\setminus\{0\})$,
\[
 \norm{|\nabla|^sf}{L^r}
 \lesssim_{a,s,r}\norm{\cL_a^{s/2}f}{L^r}
\]
provided
\begin{equation}
 \frac{s+\sigma}{3}<\frac1r<
 \min\left\{1,\frac{3-\sigma}{3}\right\},
 \label{eq:sobolev-forward-range}
\end{equation}
whereas
\[
 \norm{\cL_a^{s/2}f}{L^r}
 \lesssim_{a,s,r}\norm{|\nabla|^sf}{L^r}
\]
provided
\[
 \max\left\{\frac{s}{3},\frac{\sigma}{3}\right\}<\frac1r<
 \min\left\{1,\frac{3-\sigma}{3}\right\}.
\]
If $1<r<q<\infty$, $1/q=1/r-s/3$, and
\eqref{eq:sobolev-forward-range} holds, then
\begin{equation}
 \norm{f}{L^{q,\theta}}
 \lesssim_{a,s,r,q,\theta}
 \norm{\cL_a^{s/2}f}{L^{r,\theta}},
 \qquad 1\leq\theta\leq\infty.
 \label{eq:adapted-embedding}
\end{equation}
\end{lemma}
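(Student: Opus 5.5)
The Lebesgue-space parts of Lemma~\ref{lem:adapted-sobolev} are the three-dimensional case of \cite[Theorem~1.2]{KillipSobolev}, so I would simply invoke that theorem with $d=3$. The only check is that the ranges stated there, written in terms of $\sigma=\frac{d-2}{2}-\sqrt{(\frac{d-2}{2})^2+a}$ for general $d$, reduce to the displayed conditions. With $d=3$ one gets $\sigma=\frac12-\sqrt{\frac14+a}$, so the conditions become $\frac{s+\sigma}{3}<\frac1r<\min\{1,\frac{3-\sigma}{3}\}$ for the forward inequality and $\max\{\frac s3,\frac\sigma3\}<\frac1r<\min\{1,\frac{3-\sigma}3\}$ for the reverse inequality. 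These are exactly the displayed ranges. The restriction $0<s<2$ is also the one in that theorem.

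The new content is the Lorentz embedding \eqref{eq:adapted-embedding}. Fix $s$, $q$ and $r$ with $1/q=1/r-s/3$ and \eqref{eq:sobolev-forward-range}. Since \eqref{eq:sobolev-forward-range} is an open condition on $1/r$, I can choose $r_0<r<r_1$ close to $r$ so that both $r_0$ and $r_1$ still satisfy it. Define $q_j$ by $1/q_j=1/r_j-s/3$; then $1<r_j<q_j<\infty$ after shrinking the perturbation. For $j=0,1$ consider the operator $T=\cL_a^{-s/2}$, acting on $g=\cL_a^{s/2}f$. Composing the forward estimate with the classical Hardy--Littlewood--Sobolev inequality $\|f\|_{L^{q_j}}\lesssim\||\nabla|^sf\|_{L^{r_j}}$ gives $T:L^{r_j}\to L^{q_j}$. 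This holds first on the dense class $\{\cL_a^{s/2}f: f\in C_c^\infty(\R^3\setminus\{0\})\}$. The operator then extends boundedly, because this class is dense in $L^{r_j}$; alternatively, $T$ can be written via the heat-kernel subordination $\cL_a^{-s/2}=\Gamma(s/2)^{-1}\int_0^\infty e^{-t\cL_a}t^{s/2-1}dt$, which is defined on all of $L^{r_j}$.

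Next I apply the real interpolation identity from the preliminaries, $(L^{r_0},L^{r_1})_{\vartheta,\theta}=L^{r,\theta}$ and $(L^{q_0},L^{q_1})_{\vartheta,\theta}=L^{q,\theta}$. Here $\vartheta$ is chosen so that $1/r=(1-\vartheta)/r_0+\vartheta/r_1$. The map $1/r\mapsto1/q$ is affine with slope one, so the same $\vartheta$ produces $1/q$. Applying $T$ to $g=\cL_a^{s/2}f$ then gives \eqref{eq:adapted-embedding} for every $1\le\theta\le\infty$, including the endpoints $\theta=1$ and $\theta=\infty$ of the real method.

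The main obstacle is justifying that $T$ is a single well-defined linear operator on $L^{r_0}+L^{r_1}$ that agrees with $\cL_a^{-s/2}$ on the test class. Only then does the interpolation argument apply to the $f$ in the statement. I would handle this with the subordination formula, together with the Gaussian-type or weighted heat kernel bounds for $e^{-t\cL_a}$ from \cite{KillipSobolev}. These bounds show that $T$ is given by a positive kernel dominated by $|x-y|^{s-3}$ times the $\sigma$-weights, so it is defined on the whole sum space. If that route proves cumbersome, the fallback is to interpolate only on the dense subspace and use the density of $C_c^\infty$ functions in $L^{r,\theta}$ for finite $\theta$. The case $\theta=\infty$ would then follow by duality, or from the heat-kernel formulation.
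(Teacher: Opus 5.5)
Your proposal is correct and follows the paper. The paper gives no argument beyond citing \cite[Theorem~1.2]{KillipSobolev} with $d=3$ (your reduction of the ranges is right), and it states the Lorentz refinement \eqref{eq:adapted-embedding} without separate justification; your off-diagonal real interpolation between strong bounds $L^{r_j}\to L^{q_j}$ at nearby exponents is the standard way to supply it. Of your two ways to make $\cL_a^{-s/2}$ a single operator on $L^{r_0}+L^{r_1}$, use the subordination/weighted-kernel route (the fractional-kernel bound from \cite{KillipSobolev} that the paper itself uses in Lemma~\ref{lem:endpoint-fractional}). That route defines the operator and gives the $L^{r_j}\to L^{q_j}$ bounds directly, whereas the density of $\{\cL_a^{s/2}f\}$ in each $L^{r_j}$, and the compatibility of the two extensions, are asserted but not proved.
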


\begin{lemma}[Global spacetime estimates]
\label{lem:global-spacetime}
Let $E=\norm{u_0}{\dot H^1}$.
\begin{enumerate}
 \item If $a\geq0$, then
 \begin{equation}
 \begin{split}
  &\norm{u}{L_t^4L_x^\infty}
  +\norm{u}{L_t^{20,10/3}L_x^{15/2}}\\
  &\quad
  +\norm{\cL_a^{1/2}u}{L_t^{8,2}L_x^{12/5,4}}
  +\norm{\cL_a^{1/2}u}{L_t^{8,4}L_x^{12/5,4}}
  \leq C_a(E).
 \end{split}
 \label{eq:nonnegative-spacetime}
 \end{equation}
 If also $u_0\in L^{3/2}$, then
 \[
  \norm{u(t)}{L^3}
  \leq C_a(E)|t|^{-1/2}\norm{u_0}{L^{3/2}},
  \qquad t\neq0.
 \]
 \item Suppose $-\frac14+\frac1{25}<a<0$. If $(q,r)$ is
 nonendpoint admissible, $q>2$, $2<r<6$, $2<\vartheta\leq q$, and
 $2\leq\varphi\leq\infty$, then
 \begin{equation}
  \norm{\cL_a^{1/2}u}{L_t^{q,\vartheta}L_x^{r,\varphi}}
  \leq C_{a,q,r,\vartheta,\varphi}(E).
  \label{eq:attractive-spacetime}
 \end{equation}
 The corresponding strong $L_t^qL_x^r$ estimate also holds.
\end{enumerate}
\end{lemma}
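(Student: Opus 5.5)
The plan is to derive every bound in Lemma~\ref{lem:global-spacetime} from the global $L^{10}_{t,x}$ bound \eqref{eq:critical-bound} of Theorem~\ref{thm:global}, using the Strichartz estimates of Lemma~\ref{lem:strichartz} and the adapted Sobolev estimates of Lemma~\ref{lem:adapted-sobolev}.

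\textbf{Step 1: $\cL_a^{1/2}u$ is controlled in every admissible space.} I partition $\R$ into $N=N(E)$ intervals $I_j$ on which $\norm{u}{L^{10}_{t,x}(I_j)}\le\eta$. On each interval I apply Strichartz to $\cL_a^{1/2}u$, putting the nonlinearity in the dual endpoint space $L^2_tL^{6/5}_x$. Since $\cL_a^{1/2}$ commutes with $U_a$, I need to bound $\norm{\cL_a^{1/2}(|u|^4u)}{L^{2}_tL^{6/5}_x}$. I pass to $|\nabla|$ by Lemma~\ref{lem:adapted-sobolev}, which requires $\frac{\sigma}{3}<\frac56<\frac{3-\sigma}{3}$ and $\frac{1+\sigma}{3}<\frac56$; these hold for $a>-\frac14+\frac1{25}$, since then $\sigma<\frac3{10}$. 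The fractional chain rule then gives the bound $\norm{u}{L^{10}}^4\norm{\nabla u}{L^{10/3}}$. Converting back to $\cL_a^{1/2}$ on $L^{10/3}$ uses the reverse inequality with $\frac{3}{10}\in(\frac{1+\sigma}{3},\ldots)$, again fine because $\sigma<\frac3{10}$ gives $\frac{1+\sigma}3<\frac{13}{30}$. A continuity argument closes the estimate, and summing over the $N$ intervals yields $\norm{\cL_a^{1/2}u}{L^q_tL^r_x}\le C(E)$ for all admissible $(q,r)$. In particular this gives $L^2_tL^6_x$, which is the strong statement in (2) and the base case for (1).

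\textbf{Step 2: Lorentz refinements.} For \eqref{eq:attractive-spacetime} I write $u$ via Duhamel from a fixed time. The linear part is bounded in $L^{q,2}_tL^{r,2}_x$ by the Lorentz Strichartz estimate of Lemma~\ref{lem:strichartz}, using $\cL_a^{1/2}u_0\in L^2$. The Duhamel part is bounded by \eqref{eq:retarded-lorentz} with the forcing $\cL_a^{1/2}(|u|^4u)\in L^2_tL^{6/5}_x$ from Step~1. The embeddings $L^{r,2}\hookrightarrow L^{r,\varphi}$ for $\varphi\ge2$ and $L^{q,2}\hookrightarrow L^{q,\vartheta}$ finish the argument.

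For $a\ge0$ I do the same with $(q,r)=(8,12/5)$. The $L^{8,2}_t$ bound comes from the Lorentz Strichartz estimate. When $a\ge0$ the wave operators are bounded on all $L^p$, $1<p<\infty$, so Kowalski's estimate transfers exactly as in the proof of Lemma~\ref{lem:strichartz}. The $L^{8,4}$ bound then follows by embedding.

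\textbf{Step 3: the remaining norms for $a\ge0$.} For $L^{20,10/3}_tL^{15/2}_x$ I use \eqref{eq:adapted-embedding} with $s=1$, $r=12/5$, $\frac1q=\frac5{12}-\frac13=\frac1{12}$. This lands only in $L^{12}$, not $L^{15/2}$. So I instead take the admissible pair $(20,\frac{30}{13})$: since $\frac{13}{30}-\frac13=\frac{2}{15}$, it gives $\cL_a^{1/2}u\in L^{20,2}_tL^{30/13,2}_x$, hence $u\in L^{20,2}_tL^{15/2}_x\subset L^{20,10/3}_t$. The range condition holds because $\sigma\le0$.

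For $L^4_tL^\infty_x$ I use the Sobolev--Lorentz embedding $\dot W^{1,(3,1)}\hookrightarrow L^\infty$ together with the Riesz-transform bound $\norm{\nabla f}{L^{3,1}}\lesssim\norm{\cL_a^{1/2}f}{L^{3,1}}$. The latter is obtained by interpolating Lemma~\ref{lem:adapted-sobolev}, valid for $\frac13<\frac1r<1$ when $a\ge0$, so $r=3$ is interior. Since $(4,3)$ is admissible, I need $\cL_a^{1/2}u\in L^4_tL^{3,1}_x$. I get this by real interpolation in $x$ of Strichartz between two nearby admissible pairs, using mixed-norm real interpolation; this is the delicate point, and I would instead interpolate the Lorentz-in-time bounds $L^{q,2}_tL^{r,2}_x$ if necessary.

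\textbf{Step 4: the $L^3$ decay for $a\ge0$.} This is \cite[Theorem~1.2]{WangXuZhang} at $p=3$, and I would cite it.

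The main obstacle is Step~3's $L^4_tL^\infty_x$ bound, namely obtaining the secondary Lorentz index $1$ in space.
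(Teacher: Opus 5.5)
\paragraph{Gap: the $L_t^{8,2}$ bound.} Your part (2) argument is the paper's. The interval argument of Killip et al.\ gives $\cL_a^{1/2}(|u|^4u)\in L^2_tL^{6/5}_x$, and \eqref{eq:retarded-lorentz} plus Lorentz nesting does the rest.

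The gap is in part (1), at the term $\norm{\cL_a^{1/2}u}{L_t^{8,2}L_x^{12/5,4}}$. The Lorentz--Strichartz estimate with time index $2$ controls only the free evolution $U_a(t)\cL_a^{1/2}u_0$. For the Duhamel term, ``doing the same'' means using \eqref{eq:retarded-lorentz} with forcing in the endpoint dual space $L^2_tL^{6/5}_x$, and that estimate is available only for $2<\vartheta\le q$.

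This restriction comes from the truncation argument, so it is not cosmetic. The Christ--Kiselev/Ahn--Cho passage from $\int_{\R}$ to $\int_{s<t}$ splits a normalized $L^2_t$ source into $2^n$ disjoint time pieces of size $2^{-n/2}$. These pieces are then summed in $L^{q,\vartheta}_t$, which for $\vartheta\le q$ obeys only an upper $\vartheta$-estimate. The resulting factor $2^{n(1/\vartheta-1/2)}$ is summable only when $\vartheta>2$.

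So your argument yields $L^{8,\vartheta}_t$ for every $\vartheta>2$, in particular $L^{8,4}_t$, but not $L^{8,2}_t$. To reach $\vartheta=2$ you have two options:
\begin{enumerate}
 \item Put the forcing in a nonendpoint dual space whose time exponent is below $2$. For example, take $\cL_a^{1/2}(|u|^4u)\in L^{8/5}_tL^{4/3}_x$, bounded by $\norm{u}{L^8_tL^{12}_x}^4\norm{\nabla u}{L^8_tL^{12/5}_x}$ together with your Step~1 bounds. The truncation then lands in $L^{8,2}_t$ because $8/5<2$.
 \item Cite \cite[Lemma~3.6]{WangXuZhang}, as the paper does.
\end{enumerate}
In Section~4, only an $L^{8,\vartheta}$ bound with $2<\vartheta<4$ is actually used: interpolating it with $L^8$ makes $\norm{G}{L^{8,4}(I_j)}$ small. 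The lemma as stated, however, asserts $\vartheta=2$.

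\paragraph{The rest of part (1).} After fixing several slips, this is a workable self-contained alternative to the paper's citation of Wang--Xu--Zhang.

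For $L^4_tL^\infty_x$ you do not need the fallback. Cwikel's inclusion $(L^{q_0}_tL^{r_0}_x,L^{q_1}_tL^{r_1}_x)_{\theta,1}\hookrightarrow L^4_tL^{3,1}_x$ holds since $1\le 4$. Apply it to the linear maps $f\mapsto U_a(t)f$ on $L^2$ and $F\mapsto\int_0^tU_a(t-s)F(s)\,ds$ on $L^2_tL^{6/5}_x$. This gives $\cL_a^{1/2}u\in L^4_tL^{3,1}_x$ with no truncation problem. The Riesz step on $L^{3,1}$ then rests on $(1+\sigma)/3<1/3$, which holds for $a>0$; for $a=0$ use the classical Riesz transform. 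It does not rest on the range $1/3<1/r<1$ you state, since in that range $r=3$ is an endpoint.

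For $L_t^{20,10/3}L_x^{15/2}$, the pair $(20,30/13)$ is not admissible, and $13/30-1/3=1/10$. Use $(20,15/7)$, which gives $7/15-1/3=2/15$. Apply \eqref{eq:retarded-lorentz} with $\vartheta=10/3$, not $\vartheta=2$, for the reason above.

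In Step~1 the correct Hölder split is $\norm{u}{L^{10}_{t,x}}^4\norm{\nabla u}{L^{10}_tL^{30/13}_x}$. The forward comparison at $L^{10/3}$ would require $\sigma<-1/10$. The condition you actually verify, $(1+\sigma)/3<13/30$, is the one for $L^{30/13}$, and it is exactly where $a>-\frac14+\frac1{25}$ enters.
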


\begin{proof}
Part (1) follows from the square-function, persistence-of-regularity,
Lorentz--Strichartz estimates in
\cite[Lemmas~3.1, 3.4, and 3.6]{WangXuZhang} and \eqref{eq:adapted-embedding}. The last assertion is
\cite[Theorem~1.2]{WangXuZhang} with $p=3$.

For part (2), the local theory and the global bound
\eqref{eq:critical-bound} give, by the interval argument in
\cite[Proposition~2.10 and Theorem~2.11]{KillipNLS},
\[
 \norm{\cL_a^{1/2}u}{L_t^{10}L_x^{30/13}}
 +\norm{\cL_a^{1/2}(|u|^4u)}{L_t^2L_x^{6/5}}
 \leq C_a(E).
\]
Apply \eqref{eq:retarded-lorentz} to the differentiated Duhamel
formula. Spatial Lorentz nesting gives every $\varphi\geq2$, and time
nesting covers the stated range of $\vartheta$. The ordinary
Strichartz estimate gives the strong version.
\end{proof}

\section{Technical estimates}

The energy-class solution satisfies
\begin{equation}
 u(t)=U_a(t)u_0-i\int_0^tU_a(t-s)(|u|^4u)(s)\,ds
 \label{eq:duhamel}
\end{equation}
at every time \cite[Section~2]{KillipNLS}. Since
$|u|^4u\in C_tL_x^{6/5}\hookrightarrow C_t\Ham$, it is also an
identity of distributions.

The dispersive estimates below are compatible with
\eqref{eq:duhamel} by simultaneous smooth approximation. Cutting off
near zero and infinity and then mollifying gives density of
$C_c^\infty(\R^3\setminus\{0\})$ in $\dot H^1\cap L^{r,s}$,
$1\leq r,s<\infty$, with the sum norm; Hardy's inequality controls
the cutoff derivatives \cite[Chapter~2]{Ziemer}. The forcing is
approximated simultaneously in $L^{6/5}\hookrightarrow\Ham$ and the
relevant source space. Lemmas~\ref{lem:high-fractional},
\ref{lem:endpoint-fractional}, and
\ref{lem:nonnegative-endpoint-tools} justify the fractional and
endpoint extensions. Uniqueness of distributional limits identifies
these extensions with the energy Duhamel terms.

\subsection{Estimates for attractive potentials}

\begin{lemma}%[Auxiliary spacetime estimate]
\label{lem:auxiliary-below-six}
Assume $-\frac14+\frac1{25}<a<0$.  If $\frac6{3-4\sigma}<p<6,$ then
\begin{equation}
 \norm{u}{L_t^{8p/(6-p),4}L_x^{4p/(p-2)}}
 \leq C_{a,p}\bigl(\norm{u_0}{\dot H^1}\bigr).
 \label{eq:auxiliary-below-six}
\end{equation}
\end{lemma}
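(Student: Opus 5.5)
The estimate \eqref{eq:auxiliary-below-six} should follow from the Lorentz spacetime bound \eqref{eq:attractive-spacetime} of Lemma~\ref{lem:global-spacetime}(2) combined with the adapted Sobolev--Lorentz embedding \eqref{eq:adapted-embedding} with $s=1$, applied for almost every fixed time. The plan is to find the admissible pair whose Sobolev image is the target exponent, and to check that the hypothesis $p>\frac6{3-4\sigma}$ is exactly the forward range condition \eqref{eq:sobolev-forward-range}.

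\textbf{Exponents.} Set $r_p=\frac{4p}{p-2}$ and define $\rho$ by $\frac1{r_p}=\frac1\rho-\frac13$, so that
\[
 \frac1\rho=\frac{p-2}{4p}+\frac13=\frac{7p-6}{12p}.
\]
Let $q$ be the admissible partner of $\rho$:
\[
 \frac2q=\frac32-\frac3\rho=\frac{6-p}{4p},\qquad\text{so } q=\frac{8p}{6-p}.
\]
This is the time exponent in \eqref{eq:auxiliary-below-six}. We then check the hypotheses of Lemma~\ref{lem:global-spacetime}(2):
\begin{itemize}
 \item $\rho<6$ because $p>\frac65$, and $\rho>2$ because $p<6$.
 \item $q<\infty$ because $p<6$, so the pair $(q,\rho)$ is nonendpoint.
 \item $q\geq4$ if and only if $p\geq2$. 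This holds since $\frac6{3-4\sigma}>2$, so $\vartheta=4$ satisfies $2<\vartheta\le q$.
\end{itemize}
Taking $\varphi=4$, \eqref{eq:attractive-spacetime} gives
\[
 \norm{\cL_a^{1/2}u}{L_t^{q,4}L_x^{\rho,4}}\le C_{a,p}(E).
\]

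\textbf{Sobolev step.} Apply \eqref{eq:adapted-embedding} with $s=1$, $r=\rho$, $\theta=4$ and target exponent $r_p$. This requires $1<\rho<r_p<\infty$, which is clear, together with \eqref{eq:sobolev-forward-range}. The upper inequality $\frac1\rho<\frac12<\frac{3-\sigma}3$ is immediate. The lower inequality reads
\[
 \frac{7p-6}{12p}>\frac{1+\sigma}3
 \iff 7p-6>4p(1+\sigma)
 \iff p(3-4\sigma)>6.
\]
This is precisely the hypothesis $p>\frac6{3-4\sigma}$. Hence, for almost every $t$,
\[
 \norm{u(t)}{L^{r_p,4}}\lesssim\norm{\cL_a^{1/2}u(t)}{L^{\rho,4}}.
\]
We then use $L^{r_p,4}\hookrightarrow L^{r_p}$, valid since $r_p\geq4$ (because $p\le6$). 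Taking the $L_t^{q,4}$ quasi-norm, which is monotone under pointwise domination, yields \eqref{eq:auxiliary-below-six}.

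\textbf{Main obstacle.} The only delicate point is justifying \eqref{eq:adapted-embedding} for $u(t)$, since Lemma~\ref{lem:adapted-sobolev} is stated for $f\in C_c^\infty(\R^3\setminus\{0\})$.
\begin{itemize}
 \item For almost every $t$ we have $u(t)\in\dot H^1$ and $\cL_a^{1/2}u(t)\in L^{\rho,4}$.
 \item Define $f=\cL_a^{-1/2}g$ on the dense class of $g$. By boundedness, the embedding extends to a map $L^{\rho,4}\to L^{r_p,4}$, using that $C_c^\infty(\R^3\setminus\{0\})$ is dense in $L^{\rho,4}$ because $4<\infty$.
 \item This extension is consistent with $\cL_a^{-1/2}$ on $L^2\cap L^{\rho,4}$, hence agrees with $u(t)$.
 \item Identification of limits follows by approximating simultaneously in $\dot H^1$, as in the density remarks at the start of Section~3.
\end{itemize}
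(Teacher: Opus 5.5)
Your proposal is correct and is essentially the paper's proof. It uses the same admissible pair $(8p/(6-p),\,12p/(7p-6))$ in \eqref{eq:attractive-spacetime} with $\vartheta=\varphi=4$, the same reduction of $p>6/(3-4\sigma)$ to the forward range condition for \eqref{eq:adapted-embedding} with $s=1$, and the same nesting $L^{4p/(p-2),4}\hookrightarrow L^{4p/(p-2)}$, which actually holds for every $p>2$, not because $p\le6$. Your extra density discussion goes beyond what the paper writes, but extending the embedding that way needs density of $\{\cL_a^{1/2}f: f\in C_c^\infty(\R^3\setminus\{0\})\}$ in $L^{\rho,4}$ (or simultaneous approximation of $u(t)$ in $\dot H^1$ and in the $\cL_a^{1/2}L^{\rho,4}$ norm), which does not follow from density of $C_c^\infty(\R^3\setminus\{0\})$ itself.
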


\begin{proof}
Write $E=\norm{u_0}{\dot H^1}$ and set
\[
 \widetilde q_p=\frac{8p}{6-p},
 \qquad \widetilde r_p=\frac{12p}{7p-6}.
\]
Then $(\widetilde q_p,\widetilde r_p)$ is admissible,
\[
\frac1{\widetilde r_p}-\frac13=\frac{p-2}{4p},
\qquad
p>\frac6{3-4\sigma}
\iff
\widetilde r_p<\frac3{1+\sigma}.
\]
Thus \eqref{eq:adapted-embedding}, with $s=1$, and
\eqref{eq:attractive-spacetime} yield
\[
\norm{u}{L_t^{\widetilde q_p,4}L_x^{4p/(p-2),4}}
\lesssim_{a,p}
\norm{\cL_a^{1/2}u}
{L_t^{\widetilde q_p,4}L_x^{\widetilde r_p,4}}
\leq C_{a,p}(E).
\]
Since $p<6$ implies $4<4p/(p-2)$, Lorentz nesting gives
$L_x^{4p/(p-2),4}\hookrightarrow L_x^{4p/(p-2)}$, which proves
\eqref{eq:auxiliary-below-six}.
\end{proof}

Let $S_t(f)$ denote the solution at time $t$ with initial datum $f$.
The next lemma provides the two endpoint Lipschitz bounds needed for
nonlinear interpolation.

\begin{lemma}[Difference estimates on an energy ball]
\label{lem:difference}
Let $-\frac14+\frac1{25}<a<0$ and
\[
 \norm{f}{\dot H^1}+\norm{g}{\dot H^1}\leq E.
\]
Then
\begin{align}
 \sup_{t\in\R}\norm{S_t(f)-S_t(g)}{L^2}
 &\leq C_{a,E}\norm{f-g}{L^2}, \label{eq:difference-L2}\\
 \norm{S_t(f)-S_t(g)}{L^{7/2}}
 &\leq C_{a,E}|t|^{-9/14}\norm{f-g}{L^{7/5}},
 \qquad t\neq0. \label{eq:difference-seven}
\end{align}
\end{lemma}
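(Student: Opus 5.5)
We treat $w=S_t(f)-S_t(g)$ as the solution of a linearized equation. Write $u=S_t(f)$, $v=S_t(g)$, so that
\[
 w(t)=U_a(t)(f-g)-i\int_0^tU_a(t-s)\bigl(|u|^4u-|v|^4v\bigr)(s)\,ds,
\]
with the pointwise bound $\bigl||u|^4u-|v|^4v\bigr|\lesssim(|u|^4+|v|^4)|w|$. Both solutions obey the global bounds of Theorem~\ref{thm:global} and Lemma~\ref{lem:global-spacetime}(2), with constants depending only on $E$. The strategy is to split $\R$ into finitely many intervals on which a suitable spacetime norm of $u$ and $v$ is small. The number of such intervals is controlled by $C_a(E)$. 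On each interval we then close a linear bootstrap for $w$.

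For \eqref{eq:difference-L2} we use the Strichartz estimates of Lemma~\ref{lem:strichartz}. We need a norm of $u^4$ that is Hölder-compatible with $w\in L^\infty_tL^2_x\cap L^2_tL^6_x$. A natural choice is $\norm{u^4w}{L^{2}_tL^{6/5}_x}\le\norm{u}{L^\infty_tL^6_x}^{4}\norm{w}{L^\infty_t...}$, but this does not give smallness. Instead we use the dual endpoint $L^{1}_tL^{2}_x$:
\[
 \norm{u^4w}{L^1_tL^2_x}\le\norm{u}{L^4_tL^\infty_x}^4\norm{w}{L^\infty_tL^2_x}.
\]
For $a<0$, however, $L^4_tL^\infty_x$ requires control beyond $\dot H^1$. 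A safer choice is
\[
 \norm{u^4w}{L^{q'}_tL^{r'}_x}\le\norm{u}{L^{10}_{t,x}}^{4}\cdot\norm{w}{L^{\tilde q}_tL^{\tilde r}_x},
\]
taking $(q',r')$ dual admissible and $w$ in an admissible $L^2$-level norm. For example, take $w\in L^{10/3}_{t,x}$ (admissible) and put the forcing in $L^{10/7}_{t,x}$ (dual of the admissible pair $(10/3,10/3)$). Then $\frac{7}{10}=\frac4{10}+\frac3{10}$ works. Partition $\R$ into $N(E)$ intervals $I_j$ with $\norm{u}{L^{10}_{t,x}(I_j)}+\norm{v}{L^{10}_{t,x}(I_j)}\le\eta$. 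On each interval we get
\[
 \norm{w}{L^\infty_tL^2_x\cap L^{10/3}_{t,x}(I_j)}\lesssim\norm{w(t_j)}{L^2}+\eta^4\norm{w}{L^{10/3}_{t,x}(I_j)}.
\]
Choosing $\eta$ small absorbs the last term, and iterating over the intervals gives $C^{N(E)}\norm{f-g}{L^2}$.

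For \eqref{eq:difference-seven} we follow the interval bootstrap of Kowalski and Wang--Xu--Zhang at the level of differences. Set $M(T)=\sup_{0<|t|\le T}|t|^{9/14}\norm{w(t)}{L^{7/2}}/\norm{f-g}{L^{7/5}}$. This is finite for each $T$ by a priori local bounds, after approximating with nice data as in the density remarks. Lemma~\ref{lem:linear}(2) applies because $7/2<6<3/\sigma$, so the linear term contributes $|t|^{-9/14}$. For the Duhamel term, the dispersive estimate $L^{7/5}\to L^{7/2}$ requires $u^4w\in L^{7/5}$. By Hölder,
\[
 \norm{u^4w}{L^{7/5}}\le\norm{u}{L^{x}}^4\norm{w}{L^{7/2}},\qquad \tfrac57=\tfrac27+\tfrac4x,\quad x=\tfrac{28}{3}.
\]
This yields
\[
 |t|^{9/14}\norm{w(t)}{L^{7/2}}\lesssim 1+M\cdot |t|^{9/14}\int_0^t|t-s|^{-9/14}|s|^{-9/14}\norm{u(s)}{L^{28/3}}^4ds.
\]
The quantity $\norm{u}{L^{28/3}_x}^4$ is controlled in time via Lemma~\ref{lem:auxiliary-below-six}. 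With $4p/(p-2)=28/3$ we get $p=28/8\cdot\frac{...}{}$, i.e.\ $p=7/2\cdot\frac{...}{}$. Solving $12p=28p-56$ gives $p=7/2$. Then $u\in L^{8p/(6-p),4}_t=L^{56/5,4}_t$, so $\norm{u}{L^{28/3}}^4\in L^{14/5,1}_t$. The two weights $|t-s|^{-9/14}$ and $|s|^{-9/14}$ lie in $L^{14/9,\infty}$. Exponent bookkeeping gives $\frac9{14}+\frac9{14}+\frac5{14}=\frac{23}{14}=1+\frac9{14}$, so the Lorentz Hölder/Young--O'Neil scaling is exactly critical.

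We therefore split $[0,t]$ into finitely many intervals, depending on $E$, on which $\norm{u}{L^{56/5,4}_tL^{28/3}_x}$ is small (using $v$ as well). The Volterra-type estimate then closes interval by interval: on $[t_{j},t_{j+1}]$, restart from the bound already obtained up to $t_j$, absorb the small multiple of $M$, and iterate. Note that the secondary Lorentz index $4$ must be at most the needed index; the fourth power maps $L^{\cdot,4}$ to an $L^{\cdot,1}$-type space, which is what the O'Neil pairing with two $L^{\cdot,\infty}$ kernels needs.

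The main obstacle is this last step. The critical Volterra inequality with two singular weights does not directly give smallness from a small Lorentz norm on a subinterval, because the weight $|s|^{-9/14}$ near $s=0$ and the kernel near $s=t$ interact. We would handle it as in Kowalski: split the integral into $s<t/2$, where $|t-s|\sim t$, and $s>t/2$, where $|s|\sim t$. Each piece then reduces to a one-weight estimate of the form $\norm{|\cdot|^{-9/14}*F}{L^\infty}$ with $F\in L^{14/5,1}$, which is bounded by Lorentz duality. After that, the finite-interval iteration controls the growth of $M$ by a constant depending only on $a$ and $E$.
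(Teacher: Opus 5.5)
Your proposal is correct and follows essentially the same route as the paper's proof. For \eqref{eq:difference-L2} both use Strichartz plus a partition into finitely many intervals on which $\norm{u}{L^{10}_{t,x}}+\norm{v}{L^{10}_{t,x}}$ is small; you close in $L^{10/3}_{t,x}$ with forcing in $L^{10/7}_{t,x}$, while the paper uses $L^{10}_tL^{30/13}_x$ with forcing in $L^2_tL^{6/5}_x$, which is an immaterial difference. For \eqref{eq:difference-seven} both use the coefficient $\norm{u}{L^{28/3}}^4\in L^{14/5,1}_t$ from Lemma~\ref{lem:auxiliary-below-six} at $p=7/2$. Both split at $t/2$, with Lorentz H\"older on the early part and Young--O'Neil on the late part against $L^{14/9,\infty}$ kernels, and close the recurrence $X_w(T_j)\le C\norm{f-g}{L^{7/5}}+CX_w(T_{j-1})+C\eps^4X_w(T_j)$ by absorption and finite iteration.

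Two small points. First, the a priori finiteness of your $M(T)$ needs no approximation: $f-g\in L^{7/5}\cap L^6\subset L^2$, so the first part gives $w\in L^\infty_t(L^2\cap L^6)\subset L^\infty_tL^{7/2}$. Second, you should record that Lemma~\ref{lem:auxiliary-below-six} applies at $p=7/2$, because $6/(3-4\sigma)<7/2$ follows from $\sigma<3/10$.
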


\begin{proof}
Write $u(t)=S_t(f)$, $v(t)=S_t(g)$, and $w=u-v$.  On a time interval $I$,
Strichartz and
\[
 \bigl||u|^4u-|v|^4v\bigr|
 \lesssim |w|(|u|^4+|v|^4)
\]
give
\[
 \begin{split}
 &\norm{w}{L_t^\infty L_x^2(I)}
 +\norm{w}{L_t^{10}L_x^{30/13}(I)}\\
 &\quad\lesssim_a \norm{w(t_I)}{L^2}
 +\bigl(\norm{u}{L_{t,x}^{10}(I)}^4+
         \norm{v}{L_{t,x}^{10}(I)}^4\bigr)
   \norm{w}{L_t^{10}L_x^{30/13}(I)}.
 \end{split}
\]
Theorem~\ref{thm:global} permits finite partitions of both time
half-lines on which the coefficient is small.  Absorption and finite
iteration prove \eqref{eq:difference-L2}.

For \eqref{eq:difference-seven}, observe that
$6/(3-4\sigma)<7/2$.  Lemma~\ref{lem:auxiliary-below-six} gives
\[
 \norm{u}{L_t^{56/5,4}L_x^{28/3}}
 +\norm{v}{L_t^{56/5,4}L_x^{28/3}}\leq C_{a,E}.
\]
The difference Duhamel formula and
\eqref{eq:linear-attractive} imply, for $t>0$,
\[
 \begin{split}
 \norm{w(t)}{L^{7/2}}
 &\lesssim_a t^{-9/14}\norm{f-g}{L^{7/5}}\\
 &\quad+\int_0^t(t-s)^{-9/14}\norm{w(s)}{L^{7/2}}
 \bigl(\norm{u(s)}{L^{28/3}}^4+
       \norm{v(s)}{L^{28/3}}^4\bigr)\,ds.
 \end{split}
\]
Here the fourth power of the coefficient lies in
$L_t^{14/5,1}$ and $t^{-9/14}\in L^{14/9,\infty}$.
Splitting at $t/2$, using Lorentz H\"older on the early part and
Young--O'Neil convolution on the late part, and then partitioning the
coefficient norm into finitely many small pieces gives
\[
 X_w(T_j)\leq C_{a,E}\norm{f-g}{L^{7/5}}
 +C_{a,E}X_w(T_{j-1})+C_a\eps^4X_w(T_j),
\]
where $X_w(T)=\sup_{0<t<T}t^{9/14}\norm{w(t)}{L^{7/2}}$.
For data for which $f-g\in L^2$, this supremum is finite on compact
intervals because $w\in C_t(L^2\cap L^6)\subset C_tL^{7/2}$.
Absorption and iteration prove the estimate in that case.

For general $f-g\in L^{7/5}$, apply the cutoff mollification approximation simultaneously to \(f\) and g.
\end{proof}

\begin{lemma}[Nonlinear interpolation below $7/2$]
\label{lem:nonlinear-interpolation}
Assume $-\frac14+\frac1{25}<a<0$.  If $2<p<7/2$ and
$u_0\in\dot H^1\cap L^{p'}$, then
\[
 \norm{S_t(u_0)}{L^p}
 \leq C_{a,p}\bigl(\norm{u_0}{\dot H^1}\bigr)
 |t|^{-3(\frac12-\frac1p)}\norm{u_0}{L^{p'}},
 \qquad t\neq0.
\]
\end{lemma}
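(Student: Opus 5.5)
We plan to obtain the estimate by nonlinear real interpolation between the two Lipschitz bounds of Lemma~\ref{lem:difference}, with the datum compared against $g=0$ (whose solution is identically zero), restricted to the energy ball $B_E=\{f:\norm{f}{\dot H^1}\leq E\}$, $E=\norm{u_0}{\dot H^1}$. Fix $t\neq0$ and $2<p<7/2$, and choose $\theta\in(0,1)$ with
\[
\frac1p=\frac{1-\theta}{2}+\frac{\theta}{7/2},
\qquad\text{so that}\qquad
\frac1{p'}=\frac{1-\theta}{2}+\frac{\theta}{7/5}.
\]
One checks that the decay exponents interpolate correctly, $\theta\cdot\frac9{14}=3(\frac12-\frac1p)$. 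The map $T=S_t$ satisfies, on $B_E$, $\norm{Tf-Tg}{L^2}\leq C_{a,2E}\norm{f-g}{L^2}$ and $\norm{Tf-Tg}{L^{7/2}}\leq C_{a,2E}|t|^{-9/14}\norm{f-g}{L^{7/5}}$ by \eqref{eq:difference-L2} and \eqref{eq:difference-seven}. After rescaling time (using the scaling symmetry, which preserves $\dot H^1$ and maps the estimates to $t=1$), it suffices to treat $|t|=1$, and then the two constants are $M_0=M_1=C_{a,2E}$.

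The key step is a $K$-functional argument in the style of Peetre/Tartar nonlinear interpolation. For $u_0\in B_E\cap L^{p'}$ and $\lambda>0$, we decompose $u_0=f_0+f_1$ with $f_0\in L^2$, $f_1\in L^{7/5}$ nearly optimal for $K(\lambda,u_0;L^2,L^{7/5})$. To stay inside the energy ball, we choose this decomposition by a level-set (truncation) cutoff that is smooth enough to keep $\dot H^1$ control: for instance $f_0=u_0\,\chi(|u_0|/\mu)$ with a smooth cutoff $\chi$ and height $\mu=\mu(\lambda)$. Then $\norm{f_0}{\dot H^1}+\norm{f_1}{\dot H^1}\lesssim E$, since $|\nabla(u_0\chi(|u_0|/\mu))|\lesssim|\nabla u_0|$ pointwise. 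This truncation decomposition is the standard one that realizes $K$ up to constants for the pair $(L^2,L^{7/5})$, and $f_1=u_0-f_0$ is supported where $|u_0|\gtrsim\mu$. Writing $S_t(u_0)=S_t(f_0)+[S_t(u_0)-S_t(f_0)]$, the first term is bounded in $L^2$ by $C\norm{f_0}{L^2}$, using \eqref{eq:difference-L2} with $g=0$. The second term is bounded in $L^{7/2}$ by $C\norm{f_1}{L^{7/5}}$, using \eqref{eq:difference-seven} with the pair $(u_0,f_0)\in B_{2E}$. Hence
\[
K\bigl(\lambda,S_t u_0;L^2,L^{7/2}\bigr)\lesssim_{a,E}K\bigl(\lambda,u_0;L^2,L^{7/5}\bigr)
\]
for every $\lambda$. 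Applying the $(\cdot,\cdot)_{\theta,q}$ functional with $q=p$ then gives
\[
\norm{S_tu_0}{(L^2,L^{7/2})_{\theta,p}}\lesssim\norm{u_0}{(L^2,L^{7/5})_{\theta,p}},
\]
that is, $L^{p',p}\to L^{p,p}=L^p$. Finally we use $L^{p'}=L^{p',p'}\hookrightarrow L^{p',p}$, valid since $p'<p$.

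The main obstacle is the energy-ball constraint: the $K$-decomposition must keep both pieces in a fixed $\dot H^1$ ball, and the truncation must genuinely be $K$-optimal up to constants. We will handle this with the smooth truncation above and the explicit formula $K(\lambda,f;L^{r_0},L^{r_1})\approx$ integrals of the decreasing rearrangement $f^*$ split at a level determined by $\lambda$. If smoothness of the cutoff causes difficulty, we will instead cut at $|u_0|=\mu$ via $f_0=\max(\min(\mathrm{Re},\mu),-\mu)$-type Lipschitz truncations, which preserve $\dot H^1$ norms exactly. A secondary point is measurability and continuity in $t$, which is only needed for fixed $t$, so it poses no difficulty. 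The constant is nondecreasing in $E$ because $C_{a,2E}$ is.
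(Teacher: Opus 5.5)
Your proposal is correct and follows essentially the paper's argument: a localized Tartar-type bound $K(s,S_tu_0;L^2,L^{7/2})\lesssim_{a,E}K(Cs|t|^{-9/14},u_0;L^2,L^{7/5})$, obtained from an $\dot H^1$-preserving level truncation of $u_0$ together with the two difference estimates of Lemma~\ref{lem:difference}. The differences are cosmetic. The paper compares $S_t(u_0)$ with $S_t(R_\lambda u_0)$ in $L^2$ and $S_t(R_\lambda u_0)$ with $S_t(0)$ in $L^{7/2}$, whereas you compare $S_t(f_0)$ with $S_t(0)$ and $S_t(u_0)$ with $S_t(f_0)$. The paper also keeps the factor $|t|^{-9/14}$ through a change of variables in the $K$-integral rather than scaling to $|t|=1$, and it interpolates with second index $p'$ followed by $L^{p,p'}\hookrightarrow L^p$, where you use index $p$ and $L^{p'}\hookrightarrow L^{p',p}$.
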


\begin{proof}
	Set $E=\norm{u_0}{\dot H^1}$.  We use a localized version of the
	nonlinear $K$-functional argument in
	\cite[proof of Theorem~3, pp.~475--476]{Tartar}.  Recall that
	\[
	K(s,h;X_0,X_1)
	:=\inf_{h=h_0+h_1}
	\bigl(\norm{h_0}{X_0}+s\norm{h_1}{X_1}\bigr).
	\]
	For $\lambda>0$, define
	\[
	T_\lambda z=
	\begin{cases}
		z,& |z|\leq\lambda,\\
		\lambda z/|z|,& |z|>\lambda,
	\end{cases}
	\qquad R_\lambda=I-T_\lambda.
	\]
	These maps are Lipschitz on $\mathbb C\simeq\R^2$, vanish at zero,
	and satisfy
	\[
	\norm{T_\lambda u_0}{\dot H^1}
	+\norm{R_\lambda u_0}{\dot H^1}
	\lesssim \norm{u_0}{\dot H^1}
	\]
	by the Sobolev chain rule
	\cite[Theorem~2.1.11]{Ziemer}.  Moreover, since
	$7/5<p'<2$, one has $T_\lambda u_0\in L^2$ and
	$R_\lambda u_0\in L^{7/5}$.  The standard level-truncation formula
 \cite[Chapter~5]{BerghLofstrom} gives
	\[
	K(s,h;L^2,L^{7/5})
	\simeq
	\inf_{\lambda>0}
	\bigl(\norm{T_\lambda h}{L^2}
	+s\norm{R_\lambda h}{L^{7/5}}\bigr).
	\]
	
	Since $S_t(0)=0$, write
	\[
	S_t(u_0)
	=\bigl[S_t(u_0)-S_t(R_\lambda u_0)\bigr]
	+\bigl[S_t(R_\lambda u_0)-S_t(0)\bigr].
	\]
	All the initial data above remain in an energy ball depending only
	on $E$.  Hence \eqref{eq:difference-L2} and
	\eqref{eq:difference-seven} imply
	\[
	\begin{aligned}
		K(s,S_t(u_0);L^2,L^{7/2})
		&\lesssim_{a,E}
		\norm{T_\lambda u_0}{L^2}
		+s|t|^{-9/14}\norm{R_\lambda u_0}{L^{7/5}}.
	\end{aligned}
	\]
	Taking the infimum in $\lambda$ yields
	\[
	K(s,S_t(u_0);L^2,L^{7/2})
	\leq C_{a,E}
	K(C_{a,E}s|t|^{-9/14},u_0;L^2,L^{7/5}).
	\]
	
	Now set
	\[
	\theta=\frac{7(p-2)}{3p}\in(0,1).
	\]
	The real-interpolation identities give
	\[
	(L^2,L^{7/5})_{\theta,p'}=L^{p'},
	\qquad
	(L^2,L^{7/2})_{\theta,p'}=L^{p,p'}.
	\]
	Taking the interpolation norm and changing variables in the
	$K$-integral, we obtain
	\[
	\norm{S_t(u_0)}{L^{p,p'}}
	\leq C_{a,p}(E)|t|^{-9\theta/14}
	\norm{u_0}{L^{p'}}.
	\]
	Finally, $p'<p$ implies $L^{p,p'}\hookrightarrow L^p$, while
	\[
	\frac{9\theta}{14}
	=3\left(\frac12-\frac1p\right).
	\]
	This proves the result.
\end{proof}

Set $p_*=3/\sigma$. In the strong high-exponent range
$6\leq p<p_*$, write
\[
 \beta_p=3\left(\frac12-\frac1p\right),
 \qquad s_p=\frac34-\frac3p.
\]

\begin{lemma}[Fractional estimates below the attractive endpoint]
\label{lem:high-fractional}
Let $6\leq p<p_*$. Then
\begin{align}
 \norm{\cL_a^{-s_p/2}g}{L^p}
 &\lesssim_{a,p}\norm{g}{L^{4,1}},
 \label{eq:fractional-inverse}\\
 \norm{\cL_a^{s_p/2}(|u|^4u)}{L^{4/3,1}}
 &\lesssim_{a,p}
 \norm{u}{L^p}
 \norm{\cL_a^{1/2}u}{L^{24/11,4}}^4.
 \label{eq:fractional-nonlinear}
\end{align}
\end{lemma}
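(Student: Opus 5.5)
The plan is to deduce both estimates from the adapted Sobolev theory of Lemma~\ref{lem:adapted-sobolev}, applied with $s=s_p=\frac34-\frac3p$. Note that $6\le p<p_*$ gives $\frac14\le s_p<\frac34$, so $0<s_p<2$. The key exponent identity is
\[
 \frac14-\frac{s_p}{3}=\frac1p .
\]

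\emph{Proof of \eqref{eq:fractional-inverse}.} I would apply \eqref{eq:adapted-embedding} with $s=s_p$, $r=4$, $q=p$ and $\theta=1$ to $f=\cL_a^{-s_p/2}g$. This gives
\[
 \norm{\cL_a^{-s_p/2}g}{L^{p,1}}\lesssim\norm{g}{L^{4,1}} .
\]
The range condition \eqref{eq:sobolev-forward-range} reads $\frac{s_p+\sigma}{3}<\frac14<1$. This is equivalent to $\sigma<\frac3p$, that is $p<p_*$, which is exactly the hypothesis. Then $L^{p,1}\hookrightarrow L^p$ finishes the estimate. The only technical point is to define $\cL_a^{-s_p/2}g$ for $g$ in a dense class and extend. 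For this I would take $g=\cL_a^{s_p/2}f$ with $f\in C_c^\infty(\R^3\setminus\{0\})$, and use the density remarks at the start of Section~3.

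\emph{Proof of \eqref{eq:fractional-nonlinear}.} I would proceed in three steps.

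\textbf{Step 1: replace $\cL_a^{s_p/2}$ by $|\nabla|^{s_p}$.} Use the reverse adapted estimate at $r=4/3$. Its conditions $\max\{s_p/3,\sigma/3\}<\frac34<\min\{1,(3-\sigma)/3\}$ hold trivially. The strong-type estimate holds on an open interval of $r$ around $4/3$, so real interpolation upgrades it to $L^{4/3,1}$.

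\textbf{Step 2: fractional chain rule.} Apply it to $F(u)=|u|^4u$ in Lorentz form:
\[
 \norm{|\nabla|^{s_p}F(u)}{L^{4/3,1}}\lesssim \norm{|u|^3}{L^{8/3,4/3}}\norm{u}{L^{p}}\norm{|\nabla|^{s_p}u}{L^{b,4}},
 \qquad \frac1b=\frac38-\frac1p .
\]
The primary indices add up correctly, since $\frac38+\frac1p+\frac38-\frac1p=\frac34$. The secondary indices satisfy $\frac34+\frac1p+\frac14\ge1$, as the Lorentz H\"older inequality requires.

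\textbf{Step 3: bound the remaining factors by $\cL_a^{1/2}u$.} Each is controlled by $\norm{\cL_a^{1/2}u}{L^{24/11,4}}$.
\begin{itemize}
 \item $\norm{|u|^3}{L^{8/3,4/3}}=\norm{u}{L^{8,4}}^3$. By \eqref{eq:adapted-embedding} with $s=1$, $\frac{11}{24}-\frac13=\frac18$, this is at most $\norm{\cL_a^{1/2}u}{L^{24/11,4}}^3$. The admissibility condition is $\frac{1+\sigma}3<\frac{11}{24}$, i.e.\ $\sigma<\frac38$, which holds because $\sigma<\frac3{10}$.
 \item For the derivative factor, use the forward adapted Sobolev estimate at $r=24/11$ (interpolated to $L^{24/11,4}$) to get $|\nabla|u\in L^{24/11,4}$. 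Then the classical Lorentz Sobolev embedding of order $1-s_p$ applies. Since $\frac{1-s_p}{3}=\frac1{12}+\frac1p$, the target exponent is $\frac{11}{24}-\frac1{12}-\frac1p=\frac38-\frac1p=\frac1b$, as required.
\end{itemize}
Collecting the three steps gives $\norm{u}{L^p}\norm{\cL_a^{1/2}u}{L^{24/11,4}}^4$.

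\emph{Main obstacle.} I expect Step~2, the Lorentz-space fractional chain rule, to be the hardest. The map $u\mapsto F(u)$ is nonlinear, so one cannot simply interpolate the Lebesgue version. I would first try to rerun the Christ--Weinstein argument directly in Lorentz spaces. That argument bounds $|\nabla|^{s}F(u)$ through a Littlewood--Paley square function and the pointwise inequality $|F(u(x))-F(u(y))|\lesssim(M|u|^4(x)+M|u|^4(y))|u(x)-u(y)|$. Both the Hardy--Littlewood maximal function and the square-function bounds hold on $L^{r,s}$ for $1<r<\infty$ by interpolation of the linear or sublinear operators involved. The Lorentz H\"older inequality then replaces H\"older at the final step.
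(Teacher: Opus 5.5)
Your proposal is correct and follows the paper's proof essentially step for step. For the first bound you use \eqref{eq:adapted-embedding} with $(s,r,q)=(s_p,4,p)$ plus Lorentz nesting; for the second you use the reverse adapted comparison at $L^{4/3,1}$, a Lorentz fractional chain rule with exactly the paper's split $1/b=1/\rho_p=3/8-1/p$, and control of $\norm{u}{L^{8,4}}$ and $\norm{|\nabla|^{s_p}u}{L^{\rho_p,4}}$ by $\norm{\cL_a^{1/2}u}{L^{24/11,4}}$ under $\sigma<3/8$. There are two minor differences. The paper cites \cite[Lemma~3.8 and Corollary~3.9]{AnKimRyu} for the Lorentz chain rule instead of rederiving it. It also places $u$ in $L^{p,\infty}$, so the secondary indices sum exactly to $1$, and this is what lets the same argument work at the weak endpoint $p_*$.
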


\begin{proof}
	Since $1/p=1/4-s_p/3$ and
	$(s_p+\sigma)/3<1/4$ is equivalent to $p<3/\sigma$,
	\eqref{eq:adapted-embedding} and Lorentz nesting give
	\[
	\norm{\cL_a^{-s_p/2}g}{L^p}
	\lesssim_{a,p}\norm{g}{L^{4,1}}.
	\]
	Set $1/\rho_p=3/8-1/p$.  By the reverse adapted Sobolev estimate and
	the Lorentz fractional product rule
	\cite[Lemma~3.8 and Corollary~3.9]{AnKimRyu},
	\[
	\norm{\cL_a^{s_p/2}(|u|^4u)}{L^{4/3,1}}
	\lesssim_{a,p}
	\norm{u}{L^p}\norm{u}{L^{8,4}}^3
	\norm{|\nabla|^{s_p}u}{L^{\rho_p,4}}.
	\]
	Here $L^p\hookrightarrow L^{p,\infty}$, while
	$3/4=1/p+3/8+1/\rho_p$ and $1=0+3/4+1/4$.  Moreover,
	\[
	\norm{u}{L^{8,4}}+
	\norm{|\nabla|^{s_p}u}{L^{\rho_p,4}}
	\lesssim_{a,p}
	\norm{\cL_a^{1/2}u}{L^{24/11,4}},
	\]
	by Lorentz Sobolev embedding and the adapted comparison; their range
	conditions follow from $\sigma<3/10<3/8$.  Combining the estimates
	proves the result for smooth functions, and the general case follows
	by density.
\end{proof}

At the limiting exponent, put
\[
 \beta_*=3\left(\frac12-\frac1{p_*}\right),
 \qquad s_*=\frac34-\frac3{p_*}.
\]

\begin{lemma}[Fractional estimates at the attractive endpoint]
\label{lem:endpoint-fractional}
For every $g\in L^{4,1}$,
\begin{equation}
 \norm{\cL_a^{-s_*/2}g}{L^{p_*,\infty}}
 \lesssim_a\norm{g}{L^{4,1}}.
 \label{eq:endpoint-fractional-inverse}
\end{equation}
If $u\in\dot H^1\cap L^{p_*,\infty}$ and
$\cL_a^{1/2}u\in L^{24/11,4}$, then
\begin{align}
 \norm{\cL_a^{s_*/2}(|u|^4u)}{L^{4/3,1}}
 &\lesssim_a
 \norm{u}{L^{p_*,\infty}}
 \norm{\cL_a^{1/2}u}{L^{24/11,4}}^4.
 \label{eq:endpoint-fractional-nonlinear}
\end{align}
\end{lemma}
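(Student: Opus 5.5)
The plan is to follow the proof of Lemma~\ref{lem:high-fractional}, replacing each strong estimate by its weak-type version at $p=p_*$. At the endpoint, $s_*=\frac34-\sigma$ and $1/p_*=1/4-s_*/3$. The forward range condition \eqref{eq:sobolev-forward-range} with $r=4$ requires $(s_*+\sigma)/3<1/4$, and this becomes an equality exactly at $p_*$. So Lemma~\ref{lem:adapted-sobolev} cannot be quoted directly for \eqref{eq:endpoint-fractional-inverse}, and this is the main obstacle.

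For \eqref{eq:endpoint-fractional-inverse} I would argue by real interpolation in the Lebesgue exponent, not in the smoothness parameter. Fix $s=s_*$ and choose $r_0<4<r_1$ close to $4$. The interval $[r_0,r_1]$ must satisfy the lower condition $1/r_1>(s_*+\sigma)/3$ (this fails just beyond the endpoint), so this direct route is problematic.

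Instead I would work through the heat-kernel representation
\[
 \cL_a^{-s/2}=\Gamma(s/2)^{-1}\int_0^\infty \tau^{s/2-1}e^{-\tau\cL_a}\,d\tau
\]
together with the known heat-kernel bound for $a<0$:
\[
 |e^{-\tau\cL_a}(x,y)|\lesssim \bigl(1\vee \tfrac{\sqrt\tau}{|x|}\bigr)^{\sigma}\bigl(1\vee \tfrac{\sqrt\tau}{|y|}\bigr)^{\sigma}\tau^{-3/2}e^{-c|x-y|^2/\tau}.
\]
Integrating in $\tau$ gives a kernel bounded by $|x-y|^{s-3}$ times the weight factors. The unweighted part is the Riesz potential of order $s_*$, which maps $L^{4,1}$ into $L^{p_*,1}\subset L^{p_*,\infty}$. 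The weighted part is localized to $|x|\lesssim|x-y|$ or $|y|\lesssim|x-y|$. It is dominated by terms such as $|x|^{-\sigma}\int|y|^{-\sigma}\dots$, and more precisely by Stein--Weiss-type operators $|x|^{-\sigma}\int |x-y|^{s_*+\sigma-3}|g(y)|\,dy$ and their duals. For the first, the Riesz potential of order $s_*+\sigma=3/4$ maps $L^{4,1}$ into $L^\infty$, because $|z|^{-9/4}\in L^{4/3,\infty}$ is dual to $L^{4,1}$, and $|x|^{-\sigma}\in L^{p_*,\infty}$; Lorentz H\"older then closes the estimate exactly at $p_*$. The symmetric term, with the weight $|y|^{-\sigma}$ inside the integral and the weight at $y$ larger, I would treat in the same way after noting $|y|\lesssim|x-y|$; Lorentz H\"older with $|y|^{-\sigma}g\in L^{r,1}$, $1/r=1/4+\sigma/3$, followed by the $3/4-\sigma$ order potential, lands in $L^{p_*,1}$. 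Mixed terms are handled similarly. Density in $C_c^\infty(\R^3\setminus\{0\})$ finishes the proof.

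For \eqref{eq:endpoint-fractional-nonlinear} I would repeat the product argument verbatim with $p=p_*$. The reverse adapted estimate $\norm{\cL_a^{s_*/2}F}{L^{4/3,1}}\lesssim\norm{|\nabla|^{s_*}F}{L^{4/3,1}}$ is still valid, since its range $\max\{s_*/3,\sigma/3\}<3/4<(3-\sigma)/3$ is strict; its Lorentz version follows by interpolating in $r$. Next I would apply the Lorentz fractional product rule of \cite{AnKimRyu}, which already places $u$ in $L^{p,\infty}$, with $1/\rho_*=3/8-\sigma/3$. This gives
\[
 \norm{|\nabla|^{s_*}(|u|^4u)}{L^{4/3,1}}\lesssim \norm{u}{L^{p_*,\infty}}\norm{u}{L^{8,4}}^3\norm{|\nabla|^{s_*}u}{L^{\rho_*,4}},
\]
with secondary indices summing correctly: $0+3/4+1/4=1$. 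The last two factors are bounded by $\norm{\cL_a^{1/2}u}{L^{24/11,4}}$ by Lorentz Sobolev embedding, using the adapted comparison at $r=24/11$, which is in range because $\sigma<3/8$. I would justify the general case by density together with the hypothesis $u\in\dot H^1$.
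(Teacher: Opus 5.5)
Your proposal follows the paper's proof. For \eqref{eq:endpoint-fractional-inverse}, the paper quotes the fractional-kernel bound of \cite[Lemma~2.2 and Eq.~(2.1)]{KillipSobolev}. That bound is exactly what your heat-kernel integration produces; the $\tau$-integral converges at infinity because $s_*+2\sigma<3$. The paper then reduces it to the same three operators, $I_{s_*}|g|$, $|x|^{-\sigma}I_{3/4}|g|$ and $I_{3/4}(|\cdot|^{-\sigma}|g|)$. Your mixed region is absorbed into the last two, since $|x-y|\le 2\max\{|x|,|y|\}$. For \eqref{eq:endpoint-fractional-nonlinear}, your chain is the paper's: the reverse adapted comparison in $L^{4/3,1}$, the Lorentz product rule of \cite{AnKimRyu} with $1/\rho_*=3/8-\sigma/3$, then Lorentz Sobolev and the adapted comparison at $24/11$ using $\sigma<3/8$.

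Two points need fixing.

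First, in the term with the weight at $y$, the kernel is $|x-y|^{s_*-3}(|x-y|/|y|)^{\sigma}=|y|^{-\sigma}|x-y|^{3/4-3}$. So the potential has order $s_*+\sigma=3/4$, not $3/4-\sigma$. The operator $I_{3/4}$ maps $L^{r,1}$, with $1/r=1/4+\sigma/3$, into $L^{p_*,1}$. A potential of order $3/4-\sigma$ would only reach $L^{3/(2\sigma),1}$, so as written that step does not close.

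Second, your closing \emph{density} step cannot be norm approximation in $L^{p_*,\infty}$, because $C_c^\infty(\R^3\setminus\{0\})$ is not dense there. The paper argues as follows:
\begin{itemize}
\item It builds $u_n\in C_c^\infty(\R^3\setminus\{0\})$ by frequency truncation, cutoffs and mollification, with $\sup_n\norm{u_n}{L^{p_*,\infty}}\lesssim\norm{u}{L^{p_*,\infty}}$.
\item These satisfy $u_n\to u$ in $\dot H^1\cap L^{8,4}$ and $|\nabla|^{s_*}u_n\to|\nabla|^{s_*}u$ in $L^{\rho_*,4}$.
\item It applies the multilinear bound to differences. The factor $u_n-u_m$ must always sit in an $L^{8,4}$ or derivative slot, never in the weak one; this is possible because each product-rule term has four undifferentiated factors and only one needs the weak slot.
\item It identifies the limit via $|u_n|^4u_n\to|u|^4u$ in $L^{6/5}\hookrightarrow\Ham$. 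This is where the hypothesis $u\in\dot H^1$ is actually used.
\end{itemize}
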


Estimate \eqref{eq:endpoint-fractional-inverse} is a special case of 
\cite[Theorem~2.3]{LiuYuZhou}. We include a short direct proof for
completeness.

\begin{proof}
Since $s_*+\sigma=3/4$ and
$0<s_*<3-2\sigma$, the fractional-kernel estimate in
\cite[Lemma~2.2 and Eq.~(2.1)]{KillipSobolev} gives
\[
|\cL_a^{-s_*/2}g(x)|
\leq\cL_a^{-s_*/2}|g|(x)
\lesssim_a I_{s_*}|g|(x)
+|x|^{-\sigma}I_{3/4}|g|(x)
+I_{3/4}(|\,\cdot\,|^{-\sigma}|g|)(x),
\]
where $I_\alpha$ denotes the Euclidean Riesz potential. Young--O'Neil
fractional integration gives
\[
I_{s_*}:L^{4,1}\to L^{p_*,1},
\qquad I_{3/4}:L^{4,1}\to L^\infty.
\]
Moreover, $|x|^{-\sigma}\in L^{p_*,\infty}$ and, if
$1/r_0=1/4+1/p_*$, then Lorentz H\"older and fractional integration
give
\[
|\,\cdot\,|^{-\sigma}g\in L^{r_0,1},
\qquad I_{3/4}:L^{r_0,1}\to L^{p_*,1}.
\]
These three bounds prove \eqref{eq:endpoint-fractional-inverse}.
	
	For \eqref{eq:endpoint-fractional-nonlinear}, set
	$1/\rho_*=3/8-1/p_*$. For smooth $u$, the reverse adapted Sobolev
	comparison, the Lorentz fractional product rule
	\cite[Lemma~3.8 and Corollary~3.9]{AnKimRyu}, and Lorentz H\"older give
	\[
	\begin{aligned}
		\norm{\cL_a^{s_*/2}(|u|^4u)}{L^{4/3,1}}
		&\lesssim_a
		\norm{u}{L^{p_*,\infty}}\norm{u}{L^{8,4}}^3
		\norm{|\nabla|^{s_*}u}{L^{\rho_*,4}}\\
		&\lesssim_a
		\norm{u}{L^{p_*,\infty}}
		\norm{\cL_a^{1/2}u}{L^{24/11,4}}^4.
	\end{aligned}
	\]
	Here
	\[
	\frac34=\frac1{p_*}+\frac38+\frac1{\rho_*},
	\qquad
	\norm{u}{L^{8,4}}+
	\norm{|\nabla|^{s_*}u}{L^{\rho_*,4}}
	\lesssim_a\norm{\cL_a^{1/2}u}{L^{24/11,4}},
	\]
	and the required strict condition is
	$1/8-\sigma/3>0$, which follows from $\sigma<3/10$.
	
    Norm density in weak $L^{p_*}$ is not used.	For general $u$, 
smooth frequency truncation followed by spatial cutoffs and
	mollification gives $u_n\in C_c^\infty(\R^3\setminus\{0\})$ such that
	\[
	\sup_n\norm{u_n}{L^{p_*,\infty}}
	\lesssim\norm{u}{L^{p_*,\infty}},\qquad
	u_n\to u\ \text{in }\dot H^1\cap L^{8,4},\qquad
	|\nabla|^{s_*}u_n\to|\nabla|^{s_*}u
	\ \text{in }L^{\rho_*,4}.
	\]
	The inner cutoff is valid since $3/\rho_*-s_*=3/8>0$.
	Applying the preceding multilinear estimate to differences and using
	\[
	|u_n|^4u_n\to|u|^4u
	\quad\text{in }L^{6/5}\hookrightarrow\Ham
	\]
	allows us to pass to the limit and proves
	\eqref{eq:endpoint-fractional-nonlinear}.
\end{proof}

\begin{lemma}[Early-time estimate below the attractive endpoint]
\label{lem:early-high}
Let $6\leq p<p_*$, put
$E=\norm{u_0}{\dot H^1}$ and $A=\norm{u_0}{L^{p'}}$.
Then
\begin{equation}
 \int_0^\infty\norm{|u(s)|^4u(s)}{L^{p'}}\,ds
 \leq C_{a,p}(E)A.
 \label{eq:early-integrability}
\end{equation}
Consequently, for $t>0$,
\[
 \left\|\int_0^{t/2}U_a(t-s)(|u|^4u)(s)\,ds\right\|_{L^p}
 \leq C_{a,p}(E)t^{-\beta_p}A.
\]
\end{lemma}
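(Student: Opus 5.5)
Since $p\ge6$, we have $p'\le 6/5$, and $|u|^4u$ in $L^{p'}$ can be controlled by Hölder as
\[
\norm{|u|^4u}{L^{p'}}\le \norm{u}{L^{r}}\norm{u}{L^{q}}^4,\qquad \frac1{p'}=\frac1r+\frac4q .
\]
The factor carrying the $A$-dependence should be a decaying $L^r$ norm with $r<7/2$, controlled by Lemma~\ref{lem:nonlinear-interpolation}. The other four factors should come from Lemma~\ref{lem:auxiliary-below-six} or Lemma~\ref{lem:global-spacetime}. The difficulty is that the interpolation lemma needs data in $L^{r'}$, whereas only $u_0\in L^{p'}$ with $p'\le 6/5<r'$ is given.

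To handle this, first interpolate the data. Since $u_0\in\dot H^1\subset L^6$, Hölder/interpolation between $L^{p'}$ and $L^6$ gives $u_0\in L^{r'}$ for every $r'\in[p',6]$, with
\[
\norm{u_0}{L^{r'}}\le A^{\vartheta}\norm{u_0}{L^6}^{1-\vartheta}\lesssim A^\vartheta E^{1-\vartheta}.
\]
This only gives a power $A^\vartheta$ with $\vartheta<1$, which is not linear in $A$. I therefore plan to split time differently. For large times I use the decay from the linear-in-$A$ bound. For small times I use that $\norm{|u|^4u}{L^{p'}}$ is controlled by $A$ through a short-time bootstrap of the Duhamel formula in $L^{p'}$-type norms. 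This is the main obstacle.

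As a cleaner alternative, I would run the nonlinear interpolation argument of Lemma~\ref{lem:nonlinear-interpolation} directly with the pair $(L^2,L^{7/5})\to(L^2,L^{7/2})$ at an exponent $r\in(2,7/2)$ chosen so that $r'$ can be reached from $L^{p'}$. Since $p'<7/5$ lies outside the interpolation scale, I would instead establish, as a third Lipschitz endpoint, a difference estimate of the form $\norm{S_t f-S_tg}{L^{p}}\lesssim|t|^{-\beta_p}\norm{f-g}{L^{p'}}$ only for $t\gtrsim$ a scale, which is circular. So I fall back on the first route.

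The concrete plan has three steps.
\begin{enumerate}
\item Bound $\norm{u(s)}{L^{p'}}$ for $s\in[0,1]$ by $C(E)A$, using the $L^{p'}$ well-posedness of the Duhamel equation together with $\norm{|u|^4u}{L^{p'}}\le\norm{u}{L^{p'}}\norm{u}{L^\infty}^4$. This requires a Strichartz-type control of $u$ in $L^4_tL^\infty_x$-like norms, obtained from Lemma~\ref{lem:auxiliary-below-six} with $p$ close to $6$, after rescaling so that the interval $[0,1]$ is adapted to the data.
\item For $s\ge1$, Hölder with $r$ close to $2$ and the interpolation lemma give
\[
\norm{u(s)}{L^r}\lesssim s^{-\beta_r}\norm{u_0}{L^{r'}},
\]
and $\norm{u_0}{L^{r'}}$ is in turn bounded through the data at time $1$, using $\norm{u(1)}{L^{r'}}\lesssim \norm{u(1)}{L^{p'}}^{\vartheta}\cdots$. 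Since this again gives only a power of $A$, one must instead restart the interpolation at time $1$ with data $u(1)$ measured in $L^{p'}$ via Step~1.
\item Integrate in time: the exponents $q$ are chosen so that the product of $s^{-\beta_r}$ with an $L^{\tilde q,4}_t$ norm of $\norm{u}{L^q}^4$ is integrable by Lorentz Hölder. This requires $\beta_r>1-4/\tilde q$, which is checked from the admissible relations.
\end{enumerate}

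The consequence then follows directly. For $s\in[0,t/2]$ we have $t-s\ge t/2$, so Lemma~\ref{lem:linear}(2) gives
\[
\norm{U_a(t-s)F}{L^p}\lesssim t^{-\beta_p}\norm{F}{L^{p'}},
\]
and Minkowski's inequality combined with \eqref{eq:early-integrability} finishes the proof. The hard step is \eqref{eq:early-integrability}, specifically keeping the dependence linear in $A$ when $p'<7/5$.
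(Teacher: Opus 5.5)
\textbf{The plan for \eqref{eq:early-integrability} has a genuine gap.} Your reduction of the Duhamel bound to \eqref{eq:early-integrability} via Lemma~\ref{lem:linear} and Minkowski is correct, and you locate the obstacle correctly. The workaround you propose, however, does not close.

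\emph{Step~1.} It has no basis. $U_a(t)$ does not preserve $L^{p'}$ for $p'\neq2$ (already false for $a=0$), so the Duhamel equation has no $L^{p'}$ theory. Moreover, for $a<0$ there is no $L_t^4L_x^\infty$ control at all. The embedding \eqref{eq:adapted-embedding} only reaches exponents below $3/\sigma$, and the linear flow itself carries the unbounded profile $|x|^{-\sigma}$ (Proposition~\ref{prop:sharpness}). Lemma~\ref{lem:auxiliary-below-six} only gives $L_x^{4\tilde p/(\tilde p-2)}$ with $\tilde p>6/(3-4\sigma)>2$.

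\emph{Step~2.} It restarts Lemma~\ref{lem:nonlinear-interpolation} from data measured in $L^{p'}$. That is exactly the original mismatch, so the argument is circular.

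\emph{Step~3.} It fails independently. Suppose one factor is $\norm{u(s)}{L^r}\lesssim s^{-\beta_r}(\cdots)$ and the other four are measured in a global spacetime norm $L_t^{\tilde q}L_x^q$ bounded by $C(E)$. Scaling forces $\frac2{\tilde q}+\frac3q=\frac12$. Then your condition $\beta_r>1-\frac4{\tilde q}=\frac6q$, combined with $\frac1{p'}=\frac1r+\frac4q$, is equivalent to $r>p$. This is false for every $r\le 7/2<p$. The loss of linearity in $A$ and the failure of large-time integrability are the same scaling defect.

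\textbf{The missing idea} is to let the decaying factor enter with a fractional power, namely $1/\vartheta$ copies of it rather than one. The paper uses the $L^{7/2}$ decay
\[
\norm{u(s)}{L^{7/2}}\le C_a(E)s^{-9/14}\norm{u_0}{L^{7/5}}
\]
from \eqref{eq:difference-seven} with $g=0$. It then splits $|u|^5=|u|^{\alpha_p}|u|^{k_p}$ with $\alpha_p=\frac{7(5p-6)}{23p}\in(1,2)$ and $k_p=5-\alpha_p$.

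This $\alpha_p$ is precisely $1/\vartheta$ for your interpolation between $L^{p'}$ and $L^6$ at $L^{7/5}$, i.e.
\[
\frac57=\frac{1/\alpha_p}{p'}+\frac{1-1/\alpha_p}{6}.
\]
Raising your interpolation inequality to the power $\alpha_p$ therefore gives $\norm{u_0}{L^{7/5}}^{\alpha_p}\le C(E)A$, which is linear in $A$.

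The remaining $k_p$ factors lie in $L_t^{q_p,k_p}L_x^{R_p,\eta_p}$ by Lemma~\ref{lem:adapted-sobolev} and \eqref{eq:attractive-spacetime}. The time weight $s^{-d_p}$, with $d_p=\frac{9\alpha_p}{14}<1$, lies in $L^{1/d_p,\infty}$. It pairs with $L_t^{q_p/k_p,1}$ because $d_p+k_p/q_p=1$, which is exactly the scale-invariant balance your single-factor splitting cannot achieve. Lorentz H\"older in space and then in time gives \eqref{eq:early-integrability} directly, with no short-time analysis.
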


\begin{proof}
The estimate at $p=7/2$, obtained by setting $g=0$ in
\eqref{eq:difference-seven}, is
\[
 \norm{u(s)}{L^{7/2}}
 \leq C_a(E)s^{-9/14}\norm{u_0}{L^{7/5}}.
\]
Define
\begin{align*}
 \alpha_p&=\frac{7(5p-6)}{23p},&
 k_p&=\frac{80p+42}{23p}=5-\alpha_p,&
 d_p&=\frac{9(5p-6)}{46p},\\
 R_p&=\frac{80p+42}{13p-11},&
 r_p&=\frac{240p+126}{119p+9},&
 q_p&=\frac{160p+84}{p+54},\\
 \eta_p&=\frac{80p+42}{13p+12}.&&&
\end{align*}
The identities needed below are
\begin{align*}
 \frac1{p'}&=\frac{\alpha_p}{7/2}+\frac{k_p}{R_p},&
 \frac1{R_p}&=\frac1{r_p}-\frac13,&
 \frac2{q_p}+\frac3{r_p}&=\frac32,\\
 d_p+\frac{k_p}{q_p}&=1,&
 \eta_p&=\frac{k_p}{1-2\alpha_p/7}.&&
\end{align*}
For $p\geq6$,
\[
 2<r_p<\frac{13}{6}<\frac3{1+\sigma},
 \qquad 2<k_p<q_p,\qquad \eta_p>2.
\]
Therefore Lemmas~\ref{lem:adapted-sobolev} and
\ref{lem:global-spacetime} give
\[
 \norm{u}{L_t^{q_p,k_p}L_x^{R_p,\eta_p}}\leq C_{a,p}(E).
\]
Interpolation between $L^{p'}$ and the energy embedding
$\dot H^1\hookrightarrow L^6$ yields
\[
 \norm{u_0}{L^{7/5}}^{\alpha_p}\leq C_{a,p}(E)A.
\]
The choice of $\eta_p$ and Lorentz H\"older in space now give
\[
 \norm{|u(s)|^4u(s)}{L^{p',1}}
 \leq C_{a,p}(E)A\,s^{-d_p}
 \norm{u(s)}{L^{R_p,\eta_p}}^{k_p}.
\]
Since $s^{-d_p}\in L^{1/d_p,\infty}$ and the last factor belongs to
$L_t^{q_p/k_p,1}$, the identity
$d_p+k_p/q_p=1$ proves \eqref{eq:early-integrability}.  The final
assertion follows from Lemma~\ref{lem:linear}, because
$|t-s|\simeq t$ on $(0,t/2)$.
\end{proof}

\begin{lemma}[Early-time estimate at the attractive endpoint]
\label{lem:endpoint-early-high}
Put $E=\norm{u_0}{\dot H^1}$ and
$A_*=\norm{u_0}{L^{p_*',1}}$. Then
\begin{equation}
 \int_0^\infty
 \norm{|u(s)|^4u(s)}{L^{p_*',1}}\,ds
 \leq C_a(E)A_*.
 \label{eq:endpoint-early-integrability}
\end{equation}
Consequently, for $t>0$,
\[
 \left\|\int_0^{t/2}U_a(t-s)(|u|^4u)(s)\,ds
 \right\|_{L^{p_*,\infty}}
 \leq C_a(E)t^{-\beta_*}A_*.
\]
\end{lemma}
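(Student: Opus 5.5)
We mimic the proof of Lemma~\ref{lem:early-high}, replacing $p$ by $p_*$ and tracking Lorentz secondary indices so that the nonlinearity lands in $L^{p_*',1}$ rather than $L^{p'}$. The starting point is again the $L^{7/2}$ decay obtained from \eqref{eq:difference-seven} with $g=0$,
\[
 \norm{u(s)}{L^{7/2}}\leq C_a(E)s^{-9/14}\norm{u_0}{L^{7/5}}.
\]
Since $p_*=3/\sigma>10$, we have $p_*'<7/5<2$. Interpolation between $L^{p_*',1}$ and $L^6$ gives $\norm{u_0}{L^{7/5}}\lesssim\norm{u_0}{L^{p_*',1}}^{1-\vartheta}\norm{u_0}{L^6}^{\vartheta}$, and hence
\[
 \norm{u_0}{L^{7/5}}^{\alpha_{p_*}}\leq C_a(E)A_*,
\]
where $\alpha_{p_*}=1/(1-\vartheta)$. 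This matches the formula $\alpha_p=7(5p-6)/(23p)$ evaluated at $p=p_*$.

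We then take the same exponents $\alpha_{p_*},k_{p_*},d_{p_*},R_{p_*},r_{p_*},q_{p_*},\eta_{p_*}$ as in Lemma~\ref{lem:early-high}, with $p=p_*$. The displayed identities are purely algebraic and remain valid. The inequalities $2<r_{p_*}<13/6<3/(1+\sigma)$, $2<k_{p_*}<q_{p_*}$ and $\eta_{p_*}>2$ were checked for all $p\ge6$, so they hold at $p=p_*$. Lemmas~\ref{lem:adapted-sobolev} and \ref{lem:global-spacetime} then give
\[
 \norm{u}{L_t^{q_{p_*},k_{p_*}}L_x^{R_{p_*},\eta_{p_*}}}\leq C_a(E).
\]

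The key pointwise-in-time step is the Lorentz H\"older bound
\[
 \norm{|u|^4u}{L^{p_*',1}}\lesssim\norm{|u|^{\alpha}}{L^{7/(2\alpha),\,7/(2\alpha)}}\norm{|u|^{k}}{L^{R/k,\,\eta/k}},
\]
with $\alpha=\alpha_{p_*}$, $k=k_{p_*}$, $R=R_{p_*}$, $\eta=\eta_{p_*}$. The secondary index condition is $1\le 2\alpha/7+k/\eta$, and this is exactly what the identity $\eta=k/(1-2\alpha/7)$ provides, with equality. So we get
\[
 \norm{|u(s)|^4u(s)}{L^{p_*',1}}\le C_a(E)A_*\,s^{-d}\norm{u(s)}{L^{R,\eta}}^{k},\qquad d=\tfrac{9\alpha}{14}=d_{p_*}.
\]
Integrating in time, we use $s^{-d}\in L^{1/d,\infty}$ and $\norm{u}{L^{R,\eta}}^k\in L_t^{q/k,1}$ (because $k\le k$ in the secondary index). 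The identity $d+k/q=1$ and the time Lorentz H\"older pairing $L^{1/d,\infty}\times L^{1/(1-d),1}\to L^1$ then yield \eqref{eq:endpoint-early-integrability}.

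For the consequence, on $(0,t/2)$ we have $t-s\simeq t$. Estimate \eqref{eq:linear-weak-endpoint} gives
\[
 \norm{U_a(t-s)F(s)}{L^{p_*,\infty}}\lesssim t^{-\beta_*}\norm{F(s)}{L^{p_*',1}}.
\]
We then bring the $L^{p_*,\infty}$ norm inside the integral. This is the one point that needs care, since the weak norm is only a quasi-norm. We use the associate-norm form of $\norm{\cdot}{(p_*,\infty)}$, which is a genuine norm, and apply Minkowski by duality: pair against $g\in L^{p_*',1}$, apply Fubini, and note that separability of $L^{p_*',1}$ gives measurability of the integrand. The Bochner integral is justified through the smooth-approximation scheme described at the start of Section~3. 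The main obstacle is bookkeeping the Lorentz indices in the H\"older step; the numerical facts themselves carry over from Lemma~\ref{lem:early-high}.
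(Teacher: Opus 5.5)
Your proposal is correct and follows the paper's proof of this lemma step for step. The chain is the same: the $L^{7/2}$ decay from \eqref{eq:difference-seven} with zero second datum, the interpolation bound $\norm{u_0}{L^{7/5}}^{\alpha_*}\le C_a(E)A_*$, and the spacetime bound in $L_t^{q_*,k_*}L_x^{R_*,\eta_*}$. It continues with spatial Lorentz H\"older, where $\eta_*=k_*/(1-2\alpha_*/7)$ makes the secondary index exactly $1$, then Lorentz H\"older in time via $d_*+k_*/q_*=1$, and finally \eqref{eq:linear-weak-endpoint} with $t-s\simeq t$. Your Minkowski step through the associate norm on $L^{p_*,\infty}$ uses the convention the paper fixes in Section~2, so it makes explicit a step the paper leaves implicit rather than changing the argument.
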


\begin{proof}
Let $\alpha_*,k_*,d_*,R_*,r_*,q_*$, and $\eta_*$ be the values at
$p=p_*$ of the quantities in the proof of Lemma~\ref{lem:early-high}.
The same exponent identities hold, and $p_*>10$ gives
\[
 2<r_*<\frac{13}{6}<\frac3{1+\sigma},
 \qquad 2<k_*<q_*,\qquad \eta_*>2.
\]
Thus Lemmas~\ref{lem:adapted-sobolev} and
\ref{lem:global-spacetime} give
\[
 \norm{u}{L_t^{q_*,k_*}L_x^{R_*,\eta_*}}
 \leq C_a(E).
\]
The identity
\[
 \frac57=\frac{1/\alpha_*}{p_*'}
 +\frac{1-1/\alpha_*}{6}
\]
and Lorentz interpolation imply
$\norm{u_0}{L^{7/5}}^{\alpha_*}\leq C_a(E)A_*$.
Combining \eqref{eq:difference-seven}, with the second datum zero,
and spatial Lorentz H\"older yields
\[
 \norm{|u(s)|^4u(s)}{L^{p_*',1}}
 \leq C_a(E)A_*s^{-d_*}
 \norm{u(s)}{L^{R_*,\eta_*}}^{k_*}.
\]
Since $d_*+k_*/q_*=1$, Lorentz H\"older in time proves
\eqref{eq:endpoint-early-integrability}. The Duhamel estimate follows
from \eqref{eq:linear-weak-endpoint} and $|t-s|\simeq t$ on $(0,t/2)$.
\end{proof}

\begin{lemma}[Local control below the attractive endpoint]
\label{lem:local-high}
Let $6<p<p_*$. Then
\[
 \norm{u(t)}{L^p}\in L_{\mathrm{loc}}^{m_p}(\R_t),
 \qquad m_p=\frac{4p}{p-6}.
\]
For $p=6$, $u\in C_tL_x^6$.
\end{lemma}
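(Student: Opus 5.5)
We need $\norm{u(t)}{L^p}\in L^{m_p}_{\mathrm{loc}}$ with $m_p=4p/(p-6)$. The plan is to get a \emph{global} bound $\norm{u}{L_t^{m_p}L_x^p}\le C_{a,p}(E)$, $E=\norm{u_0}{\dot H^1}$, by exhibiting $(m_p,\cdot)$ as the Sobolev image of an admissible pair. Set $1/r=1/p+1/3$. Then
\[
\frac2{m_p}+\frac3r=\frac{p-6}{2p}+\frac3p+1=\frac32,
\]
so $(m_p,r)$ is Schrödinger-admissible, with $r<3$. Since $p>6$ we get $m_p>4$ and $2<r<3$, so the pair is nonendpoint with $q=m_p>2$ and $2<r<6$. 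Hence \eqref{eq:attractive-spacetime} applies to $(m_p,r)$, and the strong version of Lemma~\ref{lem:global-spacetime}(2) gives $\cL_a^{1/2}u\in L_t^{m_p}L_x^r$. To keep Lorentz indices convenient, I would instead take $\vartheta=m_p$ and a spatial index $\varphi\in[2,p]$, say $\varphi=2$, which only strengthens the result.

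Next apply the adapted embedding \eqref{eq:adapted-embedding} with $s=1$ at fixed time: $1/p=1/r-1/3$, $1<r<p<\infty$. This gives
\[
\norm{u(t)}{L^{p,\varphi}}\lesssim\norm{\cL_a^{1/2}u(t)}{L^{r,\varphi}},
\]
and $L^{p,\varphi}\hookrightarrow L^p$ for $\varphi\le p$. The embedding requires the range condition \eqref{eq:sobolev-forward-range}, namely
\[
\frac{1+\sigma}3<\frac1r=\frac1p+\frac13<\frac{3-\sigma}3.
\]
The left inequality is equivalent to $p<3/\sigma=p_*$, which is exactly the hypothesis. The right one reads $1/p<2/3-\sigma/3$, and it holds because $p>6$ and $\sigma<3/10$.

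Combining the two steps and taking the $L^{m_p}$ norm in time gives $\norm{u}{L_t^{m_p}L_x^p(\R\times\R^3)}\le C_{a,p}(E)$, which in particular yields the local integrability. The embedding is stated for $C_c^\infty(\R^3\setminus\{0\})$, so at almost every $t$ I would extend it by the density remark in Section~3. Measurability of $t\mapsto\norm{u(t)}{L^p}$ follows from $u\in C_t L^6$ together with lower semicontinuity of the $L^p$ norm under $L^6$ convergence.

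For $p=6$, Theorem~\ref{thm:global} gives $u\in C_t\dot H^1$, and the Sobolev embedding $\dot H^1\hookrightarrow L^6$ then gives $u\in C_tL^6_x$.

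The main point to check is that the admissible pair $(m_p,r)$ lies inside the range of \eqref{eq:attractive-spacetime} and simultaneously satisfies the adapted Sobolev condition; this is precisely where $p<p_*$ is used. Everything else is routine.
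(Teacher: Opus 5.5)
Your proposal is correct and matches the paper's proof: you use the same admissible pair $(m_p,3p/(p+3))$, a global $L_t^{m_p}L_x^{3p/(p+3)}$ bound on $\cL_a^{1/2}u$, the adapted embedding with $s=1$ (whose range condition is exactly $p<p_*$), and $\dot H^1\hookrightarrow L^6$ for $p=6$. The only difference is cosmetic: you take the spacetime bound from Lemma~\ref{lem:global-spacetime}(2), whereas the paper cites the global theory and Strichartz estimates directly, and your remarks on measurability and density are extras the paper leaves implicit.
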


\begin{proof}
	Let $6<p<p_*$ and set $r_0=3p/(p+3)$.  Since
	\[
	\frac2{m_p}+\frac3{r_0}=\frac32,
	\qquad
	\frac1p=\frac1{r_0}-\frac13,
	\]
	the global theory \cite[Theorem~1.2]{KillipNLS} and the Strichartz
	estimates \cite{BurqEtAl} give
	\[
	\norm{\cL_a^{1/2}u}{L_t^{m_p}L_x^{r_0}}\leq C_{a,p}(E).
	\]
	Moreover, $p<p_*=3/\sigma$ is precisely the condition needed to apply
	\eqref{eq:adapted-embedding} with $s=1$.  Therefore,
	\[
	\norm{u}{L_t^{m_p}L_x^p}
	\lesssim_{a,p}
	\norm{\cL_a^{1/2}u}{L_t^{m_p}L_x^{r_0}}
	\leq C_{a,p}(E).
	\]
	This proves the stronger global bound.  The case $p=6$ follows from
	$u\in C_t\dot H^1$ and $\dot H^1\hookrightarrow L^6$.
\end{proof}

\begin{lemma}[Local control at the attractive endpoint]
\label{lem:endpoint-local-high}
At $p_*=3/\sigma$,
\[
 \norm{u(t)}{L^{p_*,\infty}}
 \in L_{\mathrm{loc}}^{m_*}(\R_t),
 \qquad
 m_*:=\frac{4p_*}{p_*-6}=\frac4{1-2\sigma}>4.
\]
\end{lemma}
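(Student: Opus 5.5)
I would prove this exactly as Lemma~\ref{lem:local-high} was proved, with the Lebesgue endpoint of \eqref{eq:adapted-embedding} replaced by a weak-type Sobolev estimate. The point $p=p_*$ is excluded from \eqref{eq:adapted-embedding} with $s=1$, because there the forward condition $(1+\sigma)/3<1/r_0$ fails with equality. Here $r_0=3p_*/(p_*+3)=3/(1+\sigma)$. The exponents still match the Strichartz scale:
\[
 \frac2{m_*}+\frac3{r_0}=\frac{1-2\sigma}{2}+1+\sigma=\frac32,
 \qquad \frac1{p_*}=\frac1{r_0}-\frac13 .
\]
Since $\sigma<3/10$, we have $r_0\in(2,6)$ and $m_*\in(4,10)$, so $(m_*,r_0)$ is nonendpoint admissible.

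\emph{Step 1.} Obtain the global bound $\norm{\cL_a^{1/2}u}{L_t^{m_*}L_x^{r_0}}\le C_a(E)$. This follows from the strong version of \eqref{eq:attractive-spacetime} in Lemma~\ref{lem:global-spacetime}, or directly from \cite[Theorem~1.2]{KillipNLS} combined with Strichartz, as in Lemma~\ref{lem:local-high}.

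\emph{Step 2 (main obstacle).} Prove the endpoint embedding
\[
 \norm{f}{L^{p_*,\infty}}\lesssim_a\norm{\cL_a^{1/2}f}{L^{r_0}} .
\]
I would write $f=\cL_a^{-1/2}g$ and use the pointwise kernel bound from \cite[Lemma~2.2]{KillipSobolev}, as in the proof of \eqref{eq:endpoint-fractional-inverse}, now with $s=1$ and $1+\sigma<3-2\sigma$:
\[
 |\cL_a^{-1/2}g|\lesssim_a I_1|g|+|x|^{-\sigma}I_{1-\sigma}|g|+I_{1-\sigma}(|\cdot|^{-\sigma}|g|).
\]
The three terms are handled as follows.
\begin{itemize}
 \item[(i)] $I_1$ maps $L^{r_0}$ to $L^{p_*,r_0}$, which embeds in $L^{p_*,\infty}$.
 \item[(ii)] $I_{1-\sigma}$ maps $L^{r_0}$ to $L^{\infty}$ only in the limiting sense, since $(1-\sigma)/3=1/r_0$. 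To avoid this, I would split the Riesz kernel dyadically, or real-interpolate the adapted embedding in $r$. The bound $\norm{\cL_a^{-1/2}g}{L^{q,\infty}}\lesssim\norm{g}{L^{r}}$ holds for $r$ on the admissible side of $r_0$. Marcinkiewicz-type real interpolation does not cross the endpoint directly, though. The cleanest fallback is a restricted weak-type argument, testing against $L^{p_*',1}$ via the associate norm and using the symmetric kernel. The dual operator $\cL_a^{-1/2}$ maps $L^{p_*',1}$ into $L^{r_0'}$: for $h\in L^{p_*',1}$, the factor $|\cdot|^{-\sigma}h$ lies in $L^{1}$-type spaces, and each term can be checked with Young--O'Neil. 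I expect this duality check to be the delicate step. The factors in the middle term are ordered so that the weak factor $|x|^{-\sigma}\in L^{p_*,\infty}$ pairs with the $L^{p_*',1}$ test function, giving $\int|x|^{-\sigma}|h|\lesssim\norm{h}{L^{p_*',1}}$. After Fubini the remaining factor is $I_{1-\sigma}$ applied to $|g|$ evaluated against a finite measure, and it is controlled by $\norm{g}{L^{r_0}}$ using $|y|^{-(2+\sigma)}$-type decay of $I_{1-\sigma}(|\cdot|^{-\sigma}|h|)$. The third term is symmetric to the middle one.
\item[(iii)] Alternatively, one may cite the Lorentz endpoint of \cite[Theorem~2.3]{LiuYuZhou} with $s=1$, which presumably gives exactly this weak-type estimate.
\end{itemize}

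\emph{Step 3.} Apply the embedding at almost every time. This gives $\norm{u(t)}{L^{p_*,\infty}}\lesssim_a\norm{\cL_a^{1/2}u(t)}{L^{r_0}}$, and taking the $L^{m_*}_t$ norm yields the global bound $\norm{u}{L_t^{m_*}L_x^{p_*,\infty}}\le C_a(E)$, in particular the local bound. Measurability of $t\mapsto\norm{u(t)}{L^{p_*,\infty}}$ follows from the associate-norm definition as a supremum over a countable dense subset of $L^{p_*',1}$, using the separability noted in Section~2 and $u\in C_t\dot H^1$. Finally, $m_*=4/(1-2\sigma)>4$ because $\sigma>0$.
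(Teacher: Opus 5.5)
There is a genuine gap: the embedding in Step~2, $\norm{f}{L^{p_*,\infty}}\lesssim_a\norm{\cL_a^{1/2}f}{L^{r_0}}$ with $r_0=3/(1+\sigma)$, is false. Neither duality nor a restricted weak-type argument can prove it.

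In the kernel bound, the two singular terms carry the Riesz potential of order $s+\sigma=1+\sigma$, not $1-\sigma$; compare $3/4=s_*+\sigma$ in the proof of \eqref{eq:endpoint-fractional-inverse}. Your relation $(1-\sigma)/3=1/r_0$ and the $|y|^{-(2+\sigma)}$ decay both come from this sign slip. With the correct order, $(1+\sigma)/3=1/r_0$ is exactly the limiting exponent.

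The singular profile is also genuine, not an artifact of the upper bound. For $a<0$ the heat kernel of $\cL_a$ satisfies two-sided bounds containing the factor $(1\vee\sqrt t/|x|)^\sigma(1\vee\sqrt t/|y|)^\sigma$. Hence the kernel of $\cL_a^{-1/2}$ is $\gtrsim|y|^{\sigma-2}$ for $|x|<1$, $|y|\ge2$, and so $\cL_a^{-1/2}\mathbf 1_{B(0,1)}(y)\gtrsim|y|^{\sigma-2}$ at infinity. This lies in $L^{r_0',\infty}$ but not in $L^{r_0'}$, because $(2-\sigma)r_0'=3$. So the dual bound $L^{p_*',1}\to L^{r_0'}$ that you need fails.

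Equivalently, $g_N(y)=|y|^{-1-\sigma}(\log|y|)^{-1}\mathbf 1_{\{e<|y|<N\}}$ is bounded in $L^{r_0}$, while $\cL_a^{-1/2}g_N\gtrsim\log\log N$ on $B(0,1)$. A restricted weak-type argument at $(r_0,p_*)$ can only give the bound with $L^{r_0,1}$ on the right. This is the analogue of $\dot W^{1,3}\not\hookrightarrow L^\infty$ versus $\dot W^{1,(3,1)}\hookrightarrow L^\infty$.

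Consequently, the Lebesgue bound of Step~1 is not enough. You need $\cL_a^{1/2}u\in L_t^{m_*}L_x^{r_0,1}$, whereas Lemma~\ref{lem:global-spacetime} only gives spatial Lorentz index $\varphi\ge2$.

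The paper supplies index one by interpolating pointwise in time and then applying H\"older in time. Choose admissible pairs $(q_\pm,r_\pm)$ with $r_-<r_0<r_+$ and $1/r_0=(1-\theta)/r_-+\theta/r_+$. Then $\norm{F}{L^{r_0,1}}\lesssim\norm{F}{L^{r_-}}^{1-\theta}\norm{F}{L^{r_+}}^{\theta}$, and $(1-\theta)/q_-+\theta/q_+=1/m_*$ by linearity of the admissibility relation. The paper runs this after writing $\cL_a^{s_*/2}u=\cL_a^{-\delta_*/2}\cL_a^{1/2}u$ with $\delta_*=\frac14+\sigma$, so the interpolated bound lands in $L^{4,1}$, and then applies \eqref{eq:endpoint-fractional-inverse}.

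Your three-term kernel argument also works once the input space is $L^{r_0,1}$:
\begin{itemize}
\item $I_1:L^{r_0,1}\to L^{p_*,1}$;
\item $|x|^{-\sigma}I_{1+\sigma}|g|\le|x|^{-\sigma}\norm{I_{1+\sigma}|g|}{L^\infty}\lesssim|x|^{-\sigma}\norm{g}{L^{r_0,1}}$;
\item $|\cdot|^{-\sigma}g\in L^{3/(1+2\sigma),1}$, which $I_{1+\sigma}$ maps into $L^{p_*,1}$.
\end{itemize}
So you can repair the proof in two steps. First, replace Step~2 by $\norm{f}{L^{p_*,\infty}}\lesssim_a\norm{\cL_a^{1/2}f}{L^{r_0,1}}$. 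Second, replace Step~1 by the interpolation--H\"older bound. The result is essentially equivalent to the paper's argument.
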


\begin{proof}
Put $\delta_*=1-s_*=\frac14+\sigma$ and
$r_{0,*}=3/(1+\sigma)$. Choose $(q_\pm,r_\pm)$, $R_\pm$, and
$\theta$ as in Lemma~\ref{lem:local-high}, with $p=p_*$. The
fractional Sobolev inequalities remain strict because
\[
 \delta_*+\sigma=\frac14+2\sigma<1+\sigma=\frac3{r_{0,*}}.
\]
Adapted fractional integration and real interpolation give, for almost every $t$,
\[
 \norm{\cL_a^{s_*/2}u(t)}{L^{4,1}}
 \lesssim_a
 \norm{\cL_a^{1/2}u(t)}{L^{r_-}}^{1-\theta}
 \norm{\cL_a^{1/2}u(t)}{L^{r_+}}^\theta.
\]
Apply \eqref{eq:endpoint-fractional-inverse}, the strong spacetime
bounds in Lemma~\ref{lem:global-spacetime}, and H\"older in time.
Admissibility yields
\[
 \frac{1-\theta}{q_-}+\frac{\theta}{q_+}
 =\frac14-\frac\sigma2
 =\frac1{m_*}.
\]
\end{proof}

\begin{proposition}[Sharpness of the strong attractive range]
\label{prop:sharpness}
Let $-\frac14<a<0$ and
$\sigma=\frac12-\sqrt{\frac14+a}$.  If $p\geq3/\sigma$, there is
$f\in\dot H^1(\R^3)\cap L^{p'}(\R^3)$ such that
$U_a(t)f\notin L^p(\R^3)$ for every $t\in\R$.
\end{proposition}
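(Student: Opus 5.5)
The plan is to take one explicit radial datum whose singularity at the origin is the zero-energy profile $|x|^{-\sigma}$, and to show this singularity survives under $U_a(t)$ for every $t$ with a nonzero coefficient. I take
\[
 f(y)=|y|^{-\sigma}e^{-|y|^2}.
\]
First I check that $f$ is admissible. Since $0<\sigma<\frac12$, we have $|\nabla f|\lesssim|y|^{-\sigma-1}$ near $0$, which is square integrable because $2\sigma+2<3$, and $f$ has Gaussian decay at infinity. Hence $f\in\dot H^1(\R^3)=\Ha$. Moreover $f\in L^{q}$ for every $q<3/\sigma$, and in particular $f\in L^{p'}$, since $p'<2<3/\sigma$. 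At $t=0$ the conclusion is immediate: $|f|^p\gtrsim|y|^{-\sigma p}$ near the origin, and $\sigma p\ge3$, so $f\notin L^p$.

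For $t\neq0$ I would use the representation formula quoted in the proof of Lemma~\ref{lem:linear} (\cite[Theorem~1.3, Eqs.~(1.16), (1.27)]{FanelliDecay}). This time I need its exact form, including phases, not only its modulus bound. Since $f$ is radial, $Qf=0$, and only the $\ell=0$ kernel contributes:
\[
 U_a(t)f(x)=c\,|t|^{-3/2}e^{i|x|^2/4t}\int_{\R^3}j_{-\sigma}\Bigl(\frac{|x||y|}{2|t|}\Bigr)e^{i|y|^2/4t}f(y)\,dy ,
\]
where $c$ is a nonzero constant and $j_{-\sigma}(r)=r^{-1/2}J_{\nu_0}(r)$. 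The power series of $J_{\nu_0}$ gives
\[
 j_{-\sigma}(r)=r^{-\sigma}\bigl(c_0+O(r^2)\bigr)\quad(0<r\le1),\qquad c_0\neq0,
\]
together with the global bound $|j_{-\sigma}(r)|\lesssim r^{-\sigma}$ from Lemma~\ref{lem:linear}. I then split $U_a(t)f(x)=M_t(x)+E_t(x)$, where $M_t$ carries the leading term $c_0r^{-\sigma}$. For the main term,
\[
 M_t(x)=c\,c_0|t|^{-3/2}(2|t|)^{\sigma}e^{i|x|^2/4t}|x|^{-\sigma}\,\Gamma(t),\qquad
 \Gamma(t)=\int_{\R^3}|y|^{-2\sigma}e^{-|y|^2(1-i/4t)}\,dy .
\]
The remainder $E_t$, for $|x|\le1$, I would bound by splitting the $y$-integral at $|y|=2|t|/|x|$:
\begin{itemize}
 \item On the inner region, the $O(r^2)$ correction contributes at most $|x|^{2-\sigma}\int|y|^{2-2\sigma}e^{-|y|^2}dy\lesssim|x|^{2-\sigma}$.
 \item On the outer region, both the full kernel and the leading term are bounded by $r^{-\sigma}$ times Gaussian tails, giving $|x|^{-\sigma}e^{-c/|x|^2}$.
\end{itemize}
Both pieces are bounded near the origin, so $E_t\in L^p(\{|x|\le1\})$.

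The key step, and the one I expect to be the main obstacle in principle, is $\Gamma(t)\neq0$ for every real $t\ne0$. A generic datum could have a vanishing oscillatory moment for some $t$; the Gaussian choice is what makes this computable. Passing to polar coordinates and rotating the contour (valid since $\operatorname{Re}(1-i/4t)=1>0$),
\[
 \Gamma(t)=2\pi\,\Gamma\Bigl(\frac{3-2\sigma}2\Bigr)\Bigl(1-\frac{i}{4t}\Bigr)^{-(3-2\sigma)/2}\neq0,
\]
using the principal branch. Therefore, near the origin, $|U_a(t)f(x)|\ge c_t|x|^{-\sigma}-C_t$ with $c_t>0$, and $|x|^{-\sigma p}$ is not integrable near $0$ when $p\ge3/\sigma$. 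This gives $U_a(t)f\notin L^p$.

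The remaining care is to confirm that the cited formula holds pointwise for this $f$. The datum is not in $C_c^\infty(\R^3\setminus\{0\})$, so I would approximate by cutoffs $f_n$ converging in $L^2$. Then $U_a(t)f_n\to U_a(t)f$ in $L^2$, hence almost everywhere along a subsequence. The kernel integrals converge by dominated convergence, because $|y|^{-\sigma}|f(y)|$ is integrable. So the identity holds almost everywhere, which suffices for the non-integrability argument.
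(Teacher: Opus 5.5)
Your proof is correct, but it reaches the key fact by a different route than the paper.

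The paper uses essentially the same datum, $|x|^{-\sigma}e^{-|x|^2/4}$. It identifies this datum as the singular harmonic-oscillator eigenfunction $V_{0,1}$ of \cite{FanelliFrequency} and quotes its exact evolution $c_a(t)|x|^{-\sigma}\exp\bigl(-\frac{|x|^2}{4(1+t^2)}\bigr)\exp\bigl(\frac{i|x|^2t}{4(1+t^2)}\bigr)$ with $c_a(t)\neq0$. The failure of local $L^p$ integrability is then read off directly.

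You instead start from the dispersive kernel of \cite{FanelliDecay}, which the paper already uses in Lemma~\ref{lem:linear}. You isolate the leading term $c_0r^{-\sigma}$ of $j_{-\sigma}$, show the remainder is bounded near the origin, and reduce everything to the nonvanishing of the complex Gaussian moment $\Gamma(t)$. You then justify the kernel formula for your non-smooth datum by approximation. All of these steps check out.

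The paper's version is shorter and gives the global profile, but it outsources both the computation and the nonvanishing of $c_a(t)$. Yours is self-contained relative to tools already in the paper, and it exposes the mechanism: any radial datum with some decay and with $\int|y|^{-\sigma}e^{i|y|^2/4t}f(y)\,dy\neq0$ has an $|x|^{-\sigma}$ singularity at time $t$. Your pointwise lower bound also recovers the paper's remark that the evolution is not locally in $L^{p_*,q}$ for $q<\infty$.

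The main/remainder splitting can be dropped. Because $\nu_0+1=\frac32-\sigma$, the radial integral is exactly Weber's integral $\int_0^\infty J_{\nu}(bs)e^{-zs^2}s^{\nu+1}\,ds=b^{\nu}(2z)^{-\nu-1}e^{-b^2/(4z)}$ for $\operatorname{Re}z>0$. With the Gaussian $e^{-|y|^2/4}$, this reproduces exactly the closed form the paper cites.
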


\begin{proof}
The singular harmonic oscillator eigenfunction $V_{0,1}$ in
\cite[Eq.~(1.11) and Theorem~2.4]{FanelliFrequency} is, up to a
constant,
\[
 f(x)=|x|^{-\sigma}e^{-|x|^2/4}.
\]
Because $\sigma<1/2$, $f\in\dot H^1\cap L^{p'}$.  Its explicit
evolution has the form
\[
 U_a(t)f(x)=c_a(t)|x|^{-\sigma}
 \exp\left(-\frac{|x|^2}{4(1+t^2)}\right)
 \exp\left(\frac{i|x|^2t}{4(1+t^2)}\right),
 \qquad c_a(t)\neq0.
\]
It is not locally in $L^p$ when $\sigma p\geq3$.
At $p=p_*$, the datum also belongs to $L^{p_*',1}$, whereas its
evolution is not locally in $L^{p_*,q}$ for any $q<\infty$. Thus the
weak target in \eqref{eq:attractive-endpoint} is sharp in its Lorentz
fine index.
\end{proof}

\subsection{Endpoint estimates for nonnegative potentials}

For $a\geq0$, define the endpoint adapted Sobolev space
\[
 \dot W_a^{1,(3/2,1)}
 :=\overline{C_c^\infty(\R^3\setminus\{0\})}^{\,
 \norm{\cL_a^{1/2}(\cdot)}{L^{3/2,1}}}.
\]

\begin{lemma}[Endpoint Sobolev--Lorentz estimates]
\label{lem:nonnegative-endpoint-tools}
Let $a\geq0$. For $t\neq0$, $U_a(t)$ extends uniquely to a bounded map
$\dot W_a^{1,(3/2,1)}\to L^\infty$ satisfying
\begin{equation}
 \norm{U_a(t)f}{L^\infty}
 \lesssim_a |t|^{-1/2}
 \norm{\cL_a^{1/2}f}{L^{3/2,1}}.
 \label{eq:hybrid-endpoint}
\end{equation}
Moreover, if $u\in\dot H^1\cap L^\infty$ and
$\cL_a^{1/2}u\in L^{12/5,4}$, then $|u|^4u$ belongs to
$\dot W_a^{1,(3/2,1)}$ and
\begin{equation}
 \norm{\cL_a^{1/2}(|u|^4u)}{L^{3/2,1}}
 \lesssim_a
 \norm{u}{L^\infty}
 \norm{\cL_a^{1/2}u}{L^{12/5,4}}^4.
 \label{eq:quintic-endpoint}
\end{equation}
\end{lemma}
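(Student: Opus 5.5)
For \eqref{eq:hybrid-endpoint} I will work with $f\in C_c^\infty(\R^3\setminus\{0\})$ and factor the operator as
\[
U_a(t)f=U_a(t)\cL_a^{-1/2}\,\cL_a^{1/2}f,
\]
which uses that $U_a(t)$ commutes with functions of $\cL_a$. Set $g=\cL_a^{1/2}f$. The first step is to put $U_a(t)g$ in $L^{3,1}$ with the correct decay. I real-interpolate the two instances $r=2$ and $r=\infty$ of \eqref{eq:linear-nonnegative}, that is, the maps $L^2\to L^2$ and $L^1\to L^\infty$, with $\theta=1/3$ and second index $1$. This gives
\[
\norm{U_a(t)g}{L^{3,1}}\lesssim_a |t|^{-1/2}\norm{g}{L^{3/2,1}},
\]
and the exponent is right since $3(\tfrac12-\tfrac13)=\tfrac12$.

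The second step is to show $\norm{\cL_a^{-1/2}h}{L^\infty}\lesssim_a\norm{h}{L^{3,1}}$. I use that for $a\ge0$ the heat kernel of $\cL_a$ is pointwise dominated by the Euclidean heat kernel. Subordination then gives $|\cL_a^{-1/2}h|\lesssim I_1|h|$. The endpoint O'Neil estimate $I_1:L^{3,1}\to L^\infty$ holds because the kernel $|x|^{-2}$ lies in $L^{3/2,\infty}$, which is the dual of $L^{3,1}$. Taking $h=U_a(t)g$ and combining with the first step proves \eqref{eq:hybrid-endpoint} on the dense class. The unique extension to $\dot W_a^{1,(3/2,1)}$ then follows from the definition of that space as a closure.

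For \eqref{eq:quintic-endpoint} I first handle smooth $u$. Since $a\ge0$, the reverse adapted Sobolev estimate of Lemma~\ref{lem:adapted-sobolev} with $s=1$ holds in the range $1/3<1/r<1$. I need it in Lorentz form at $r=3/2$; I will get this by real interpolation of the strong estimates at two exponents on either side of $3/2$, which gives $L^{3/2,1}$. This yields
\[
\norm{\cL_a^{1/2}(|u|^4u)}{L^{3/2,1}}\lesssim\norm{\nabla(|u|^4u)}{L^{3/2,1}}\lesssim\norm{|u|^4|\nabla u|}{L^{3/2,1}}.
\]
Lorentz H\"older then bounds the right-hand side by
\[
\norm{u}{L^\infty}\,\norm{u}{L^{12,\infty}}^3\,\norm{u}{L^{12,\infty}}\,\norm{\nabla u}{L^{12/5,4}}.
\]
The indices are consistent: $3\cdot\tfrac1{12}+\tfrac1{12}+\tfrac5{12}=\tfrac23$, but the secondary index must be at most $1$. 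The secondary index is therefore the delicate point, so I redistribute it. I place the four factors $u$ in $L^{12,4}$, which gives secondary sum $4\cdot\tfrac14+\tfrac14\ge1$. This gives $\norm{u}{L^{12,4}}\lesssim\norm{\nabla u}{L^{12/5,4}}$ by Lorentz Sobolev. Finally, the forward comparison $\norm{\nabla u}{L^{12/5,4}}\lesssim\norm{\cL_a^{1/2}u}{L^{12/5,4}}$ is valid since $\sigma\le0$ and $1/3<5/12<1$, again after interpolating to reach the Lorentz scale.

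The main obstacle is membership of $|u|^4u$ in the closure space for general $u$. My plan is to approximate $u$ by $u_n\in C_c^\infty(\R^3\setminus\{0\})$ using frequency truncation, cutoffs and mollification, arranged so that the following hold:
\begin{itemize}
\item $\sup_n\norm{u_n}{L^\infty}\lesssim\norm{u}{L^\infty}$;
\item $u_n\to u$ in $\dot H^1$;
\item $\cL_a^{1/2}u_n\to\cL_a^{1/2}u$ in $L^{12/5,4}$.
\end{itemize}
Applying the multilinear estimate to differences requires $u_n\to u$ in $L^{12,4}$, not in $L^\infty$. For this I telescope $|u_n|^4u_n-|u_m|^4u_m$ and place the difference factor in $L^{12,4}$ or $\nabla$-$L^{12/5,4}$. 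The remaining four factors go to $L^\infty$ and $L^{12,\infty}$, which needs H\"older exponents summing to $2/3$. That is possible, since the product estimate is $L^{12/5,4}$ times $L^{12,\cdot}$ powers. This makes $|u_n|^4u_n$ Cauchy in the $\dot W_a^{1,(3/2,1)}$ norm, and its limit agrees with $|u|^4u$ in $L^{6/5}$. I expect this approximation step, together with the Lorentz secondary-index bookkeeping, to be the hardest part.
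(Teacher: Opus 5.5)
Your proof of \eqref{eq:hybrid-endpoint} is correct by a different route (see the second paragraph). For \eqref{eq:quintic-endpoint} you follow the paper's route, but the Lorentz--H\"older bookkeeping is wrong at exactly the point you flag as delicate.

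\textbf{The nonlinear estimate.} The product $|u|^4|\nabla u|$ has four factors of $u$, but your bound $\|u\|_{L^\infty}\|u\|_{L^{12,\infty}}^3\|u\|_{L^{12,\infty}}\|\nabla u\|_{L^{12/5,4}}$ uses five. The index check you record is also false: $3\cdot\frac1{12}+\frac1{12}+\frac5{12}=\frac34$, not $\frac23$. The miscount persists when you move ``the four factors'' to $L^{12,4}$. Together with $\nabla u\in L^{12/5,4}$ the primary indices sum to $\frac34$, so you land in $L^{4/3}$, not $L^{3/2}$. With the extra $L^\infty$ factor the right side is homogeneous of degree six, against degree five on the left. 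The slack $4\cdot\frac14+\frac14>1$ in the secondary index is an artifact of this miscount.

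The correct split, which is the paper's, is $\|u\|_{L^\infty}\|u\|_{L^{12,4}}^3\|\nabla u\|_{L^{12/5,4}}$. Its primary indices sum to $0+\frac3{12}+\frac5{12}=\frac23$ and its secondary indices to $\frac34+\frac14=1$, exactly at the threshold. This breaks your approximation step as stated. Any $L^{12}$ factor placed in $L^{12,\infty}$ contributes $0$ to the secondary sum. For example, $\nabla(u_n-u_m)\in L^{12/5,4}$, one factor in $L^\infty$ and three in $L^{12,\infty}$ give secondary sum $\frac14<1$, so the difference does not land in $L^{3/2,1}$.

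The fix is to take every $L^{12}$ factor in $L^{12,4}$. This is harmless, since $\|u_n\|_{L^{12,4}}\lesssim\|\nabla u_n\|_{L^{12/5,4}}$ is uniformly bounded. Each telescoped term then has secondary sum exactly $1$:
\begin{itemize}
\item[(i)] $u_n-u_m\in L^{12,4}$, one factor in $L^\infty$, two in $L^{12,4}$, and $\nabla u_n\in L^{12/5,4}$;
\item[(ii)] $\nabla(u_n-u_m)\in L^{12/5,4}$, one factor in $L^\infty$, and three in $L^{12,4}$.
\end{itemize}
Since $|u_n|^4u_n=u_n^3\bar u_n^2\in C_c^\infty(\R^3\setminus\{0\})$, your Cauchy argument then does prove membership in $\dot W_a^{1,(3/2,1)}$. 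The paper asserts this membership without spelling it out.

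\textbf{The linear endpoint.} The paper applies Alvino's inequality $\|h\|_{L^\infty}\lesssim\|\nabla h\|_{L^{3,1}}$ to $h=U_a(t)f$. It then needs $\nabla\cL_a^{-1/2}$ to be bounded on $L^{3,1}$. Lemma~\ref{lem:adapted-sobolev} supplies this only for $a>0$: at $r=3$ the forward condition $(1+\sigma)/3<1/3$ means $\sigma<0$. So the paper treats $a=0$ separately with the classical Riesz transforms.

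You instead bound $\cL_a^{-1/2}$ directly. For $a\ge0$, the domination $|e^{-s\cL_a}h|\le e^{s\Delta}|h|$ and subordination give $|\cL_a^{-1/2}h|\lesssim I_1|h|$. Then $I_1:L^{3,1}\to L^\infty$ follows by duality with $|x|^{-2}\in L^{3/2,\infty}$. This treats all $a\ge0$ uniformly and avoids singular integrals. It is essentially Alvino's argument transplanted to $\cL_a$. It uses positivity of the potential, which is exactly the lemma's hypothesis. For $a<0$ the kernel acquires the $|x|^{-\sigma}$ factors seen in Lemma~\ref{lem:endpoint-fractional}.
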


\begin{proof}
Real interpolation of the $L^1\to L^\infty$ estimate
\eqref{eq:linear-nonnegative} with $L^2$ unitarity, at parameter
$2/3$ and second Lorentz exponent one, gives
\[
 \norm{U_a(t)g}{L^{3,1}}
 \lesssim_a |t|^{-1/2}\norm{g}{L^{3/2,1}}.
\]
For $a>0$, the adapted Riesz transform
$\nabla\cL_a^{-1/2}$ is bounded on $L^{3,1}$ by
Lemma~\ref{lem:adapted-sobolev}; for $a=0$, use the classical Riesz
transform theorem. Spectral commutation and the limiting
Sobolev--Lorentz inequality
\[
 \norm{h}{L^\infty}\lesssim\norm{\nabla h}{L^{3,1}},
\]
due to Alvino \cite{Alvino}, prove \eqref{eq:hybrid-endpoint} for
$f\in C_c^\infty(\R^3\setminus\{0\})$. Completion gives the asserted
unique extension. This is
the endpoint replacement for the ordinary Sobolev step in
\cite[Section~4.2]{WangXuZhang}.

For the nonlinear estimate, Lemma~\ref{lem:adapted-sobolev} and the
classical Lorentz Sobolev embedding give
\[
 \norm{\nabla u}{L^{12/5,4}}+\norm{u}{L^{12,4}}
 \lesssim_a\norm{\cL_a^{1/2}u}{L^{12/5,4}}.
\]
The Sobolev chain rule
\cite[Theorem~2.1.11]{Ziemer} and Lorentz H\"older therefore imply
\[
 \begin{split}
 \norm{\cL_a^{1/2}(|u|^4u)}{L^{3/2,1}}
 &\lesssim_a\norm{\nabla(|u|^4u)}{L^{3/2,1}}\\
 &\lesssim
 \norm{u}{L^\infty}\norm{u}{L^{12,4}}^3
 \norm{\nabla u}{L^{12/5,4}}\\
 & \lesssim_a
 \norm{u}{L^\infty}
 \norm{\cL_a^{1/2}u}{L^{12/5,4}}^4.
 \end{split}
\]
\end{proof}

\section{Proof of the main theorem}

Set $E=\norm{u_0}{\dot H^1}$. It suffices to consider $t>0$, since
$\overline{u(-t,x)}$ also solves \eqref{eq:nls}. All interval
iterations below are first performed on $(0,R)$, $R<\infty$, with
the last interval truncated at $R$; the bounds are uniform in $R$.

\subsection{Attractive potentials}

Assume throughout this subsection that
$-\frac14+\frac1{25}<a<0$.

\subsubsection{\texorpdfstring{The strong range $2<p<p_*$}%
 {The strong range 2 < p < p*}}

For $2<p<3/\sigma$, set $A=\norm{u_0}{L^{p'}}$ and
\[
 \beta_p=3\left(\frac12-\frac1p\right),
 \qquad
 X_p(T)=\operatorname*{ess\,sup}_{0<t<T}
 t^{\beta_p}\norm{u(t)}{L^p},\qquad X_p(0)=0.
\]

If $2<p<7/2$, estimate \eqref{eq:attractive-strong} is
Lemma~\ref{lem:nonlinear-interpolation}.  At $p=7/2$ it follows from
Lemma~\ref{lem:difference} with the second solution equal to zero.

For $7/2<p<6$, interpolation gives $u_0\in L^2$. The $L^2$
regularity in \cite[Proposition~2.10]{KillipNLS}, continued by
uniqueness, and the energy embedding imply
\[
 u\in C_t(L_x^2\cap L_x^6)\subset C_tL_x^p.
\]
Thus $X_p(T)$ is finite for every $T<\infty$.  By
Lemma~\ref{lem:auxiliary-below-six}, divide $[0,\infty)$ into finitely
many consecutive intervals $I_j=[T_{j-1},T_j)$ such that
\[
 \norm{u}{L_t^{8p/(6-p),4}L_x^{4p/(p-2)}(I_j\times\R^3)}
 <\eps.
\]
The number of intervals depends only on $a,p,E$, and $\eps$.

The linear term in \eqref{eq:duhamel} is bounded by
$C_{a,p}t^{-\beta_p}A$. For the nonlinear term, put
$F=|u|^4u$ and use
\[
 \norm{F}{L^{p'}}
 \leq\norm{u}{L^p}\norm{u}{L^{4p/(p-2)}}^4.
\]
Splitting its time integral at $t/2$, using $|t-s|\simeq t$ on the
first part and $s\simeq t$ on the second, and applying Lorentz
H\"older and Young--O'Neil convolution give
\[
 t^{\beta_p}
 \left\|\int_0^tU_a(t-s)F(s)\,ds\right\|_{L^p}
 \lesssim_{a,p}
 \left\|s^{\beta_p}\norm{u(s)}{L^p}
 \norm{u(s)}{L^{4p/(p-2)}}^4
 \right\|_{L_s^{2p/(6-p),1}(0,t)}.
\]
Here
\[
 s^{-\beta_p}\in L^{2p/[3(p-2)],\infty},
 \qquad
 \frac{3(p-2)}{2p}+\frac{6-p}{2p}=1.
\]
Splitting the time norm at $T_{j-1}$ and taking essential suprema gives
\[
 X_p(T_j)
 \leq C_{a,p}(E)A+C_{a,p}(E)X_p(T_{j-1})
 +C_{a,p}\eps^4X_p(T_j).
\]
Choose $\eps$ small, absorb the last term, and iterate. Letting
$R\to\infty$ proves \eqref{eq:attractive-strong} in this range.

For $6\leq p<p_*$, put
\[
 Z(t)=\norm{\cL_a^{1/2}u(t)}{L^{24/11,4}},
 \qquad
 M_p(t)=t^{\beta_p}\norm{u(t)}{L^p}.
\]
Lemma~\ref{lem:linear} and Lemma~\ref{lem:early-high} control the
linear and early nonlinear terms. For $t/2<s<t$, commutation with
the propagator, \eqref{eq:fractional-inverse}, the
$L^{4/3,1}\to L^{4,1}$ estimate in Lemma~\ref{lem:linear}, and
\eqref{eq:fractional-nonlinear} give
\[
 \norm{U_a(t-s)(|u|^4u)(s)}{L^p}
 \lesssim_{a,p}(t-s)^{-3/4}\norm{u(s)}{L^p}Z(s)^4.
\]
Since $s\simeq t$ on the late interval,
\[
 M_p(t)\leq C_{a,p}(E)A+
 C_{a,p}\int_{t/2}^t(t-s)^{-3/4}M_p(s)Z(s)^4\,ds.
\]
Here $Z\in L^{16,4}$ by \eqref{eq:attractive-spacetime}, and
$M_p\in L^m(0,R)$ by Lemma~\ref{lem:local-high}, with $m=m_p>4$
when $p>6$; take $m=8$ when $p=6$.
Lorentz interpolation gives
\[
 \norm{Z}{L^{16,4}(I)}
 \lesssim \norm{Z}{L^{16,8/3}(I)}^{3/5}
 \norm{Z}{L^{16}(I)}^{2/5}.
\]
Partitioning the integral of $Z^{16}$ therefore gives intervals
$I_j=[T_{j-1},T_j)$ with $\norm{Z}{L^{16,4}(I_j)}\leq\eps$ and
$J$ controlled by $a,E$, and $\eps$.

To justify finite suprema, split the scalar inequality at
$T_{j-1}$ and bound the earlier contribution and the constant term
by $B_{p,j}=C_{a,p}(E)(A+X_p(T_{j-1}))$. For small $\eps$,
Lorentz H\"older and Young--O'Neil \cite[Theorems~3.4--3.6]{ONeil}
show that $n$ substitutions on $I_j\cap(0,R)$ give constant terms
bounded by $2B_{p,j}$ and a remainder of $L^m$ norm at most
$2^{-n}\norm{M_p}{L^m(I_j\cap(0,R))}$. A subsequence tends to zero
almost everywhere; induction from $X_p(0)=0$ gives $X_p(R)<\infty$.
Taking essential suprema now yields the recurrence of
\cite[Section~3.2]{Kowalski} and \cite[Section~4.2]{WangXuZhang}:
\[
 X_p(T_j)
 \leq C_{a,p}(E)A+C_{a,p}(E)X_p(T_{j-1})
 +C_{a,p}\eps^4X_p(T_j).
\]
Absorption and finite iteration, followed by $R\to\infty$, yield
\[
 \operatorname*{ess\,sup}_{t>0}
 t^{\beta_p}\norm{u(t)}{L^p}
 \leq C_{a,p}(E)A.
\]

\subsubsection{\texorpdfstring{The weak endpoint $p=p_*$}%
 {The weak endpoint p = p*}}

Set
\[
 p_*=\frac3\sigma,\qquad
 \beta_*=\frac32-\sigma,\qquad
 A_*=\norm{u_0}{L^{p_*',1}},
\]
and define
\[
 M_*(t)=t^{\beta_*}\norm{u(t)}{L^{p_*,\infty}},
 \qquad X_*(T)=\operatorname*{ess\,sup}_{0<t<T}M_*(t),
 \qquad X_*(0)=0.
\]
The weak linear endpoint \eqref{eq:linear-weak-endpoint} gives
\[
 \norm{U_a(t)u_0}{L^{p_*,\infty}}
 \lesssim_a t^{-\beta_*}A_*,
\]
and Lemma~\ref{lem:endpoint-early-high} gives the same bound for the
Duhamel integral over $(0,t/2)$. For the late integral, commute
$\cL_a^{s_*/2}$ with the propagator and apply, in order,
\eqref{eq:endpoint-fractional-inverse}, the
$L^{4/3,1}\to L^{4,1}$ linear estimate, and
\eqref{eq:endpoint-fractional-nonlinear}. Thus
\[
 \norm{U_a(t-s)(|u|^4u)(s)}{L^{p_*,\infty}}
 \lesssim_a(t-s)^{-3/4}
 \norm{u(s)}{L^{p_*,\infty}}Z(s)^4.
\]
It follows that, for almost every $t>0$,
\[
 M_*(t)\leq C_a(E)A_*+
 C_a\int_{t/2}^t(t-s)^{-3/4}M_*(s)Z(s)^4\,ds.
\]
Here $Z$ is as above. On the same small intervals,
Lemma~\ref{lem:endpoint-local-high} gives $M_*\in L^{m_*}(0,R)$,
$m_*>4$, so the preceding substitution argument gives
$X_*(R)<\infty$. Lorentz H\"older and essential suprema then yield
\[
 X_*(T_j)
 \leq C_a(E)A_*+C_a(E)X_*(T_{j-1})
 +C_a\eps^4X_*(T_j).
\]
Absorbing the last term and iterating gives, as $R\to\infty$,
\[
 \operatorname*{ess\,sup}_{t>0}
 t^{\beta_*}\norm{u(t)}{L^{p_*,\infty}}
 \leq C_a(E)A_*.
\]

\subsection{The nonnegative endpoint}

Assume $a\geq0$ and set $A=\norm{u_0}{L^1}$.
The linear term satisfies
\[
 \norm{U_a(t)u_0}{L^\infty}\lesssim_a t^{-3/2}A.
\]
Interpolation gives $\norm{u_0}{L^{3/2}}\lesssim A^{3/5}E^{2/5}$;
hence \cite[Theorem~1.2, $p=3$]{WangXuZhang} implies
\[
 \norm{u(s)}{L^3}^{5/3}
 \leq C_a(E)s^{-5/6}A.
\]
Since $\norm{|u|^4u}{L^1}\leq
\norm{u}{L^3}^{5/3}\norm{u}{L^{15/2}}^{10/3}$ and
$\norm{u}{L_x^{15/2}}^{10/3}\in L_t^{6,1}$ by
\eqref{eq:nonnegative-spacetime}, Lorentz H\"older against
$s^{-5/6}\in L^{6/5,\infty}$ gives
\[
 \int_0^\infty\norm{|u(s)|^4u(s)}{L^1}\,ds
 \leq C_a(E)A.
\]
As $|t-s|\simeq t$ for $0<s<t/2$,
\begin{equation}
 \left\|\int_0^{t/2}U_a(t-s)(|u|^4u)(s)\,ds\right\|_{L^\infty}
 \leq C_a(E)t^{-3/2}A.
 \label{eq:early-nonnegative}
\end{equation}

Put
\[
 G(t)=\norm{\cL_a^{1/2}u(t)}{L^{12/5,4}}
\]
and, for $0<T\leq\infty$, define
\[
 \norm{u}{X(T)}
 :=\operatorname*{ess\,sup}_{0<t<T}
       t^{3/2}\norm{u(t)}{L^\infty},
 \qquad \norm{u}{X(0)}:=0.
\]
By \eqref{eq:hybrid-endpoint}, \eqref{eq:quintic-endpoint}, and
\eqref{eq:early-nonnegative}, using $s\simeq t$ on the late interval,
\[
 t^{3/2}\norm{u(t)}{L^\infty}
 \leq C_a(E)A+C_a\int_{t/2}^t(t-s)^{-1/2}
 s^{3/2}\norm{u(s)}{L^\infty}G(s)^4\,ds.
\]

As in the attractive case, the bounds $G\in L^{8,2}\cap L^{8,4}$
from Lemma~\ref{lem:global-spacetime} give a finite partition
$I_j=[T_{j-1},T_j)$ such that
\[
 \norm{G}{L^{8,4}(I_j)}\leq\eps,
 \qquad 1\leq j\leq J,
\]
with $J$ controlled by $a,E$, and $\eps$. Since
$t^{3/2}\norm{u(t)}{L^\infty}\in L^4(0,R)$ by
\eqref{eq:nonnegative-spacetime}, the preceding substitution
argument gives $\norm{u}{X(R)}<\infty$, now using the kernel
$t^{-1/2}$ and $G^4\in L^{2,1}$. Splitting at $T_{j-1}$, applying
Lorentz H\"older, and taking essential suprema gives
\[
 \norm{u}{X(T_j)}
 \leq C_a(E)A+C_a(E)\norm{u}{X(T_{j-1})}
 +C_a\eps^4\norm{u}{X(T_j)}.
\]
Absorption and finite iteration give, as $R\to\infty$,
\[
 \operatorname*{ess\,sup}_{t>0}
 t^{3/2}\norm{u(t)}{L^\infty}\leq C_a(E)A.
\]

To extend the three estimates to every $t_0>0$, choose
$t_n\to t_0$ outside the exceptional set.  Energy continuity gives
$u(t_n)\to u(t_0)$ in distributions.  By reflexivity, or weak-$*$
compactness at the two endpoints, a subsequence converges in the
corresponding target space.  Its limit must be $u(t_0)$; hence weak lower
semicontinuity gives the desired estimate at $t_0$.  Negative times follow
by time reversal.

\setlength{\bibsep}{6pt plus 1pt minus 1pt}


\begin{thebibliography}{99}

\bibitem{AhnCho}
C. Ahn, Y. Cho,
Lorentz space extension of Strichartz estimates,
\emph{Proc. Am. Math. Soc.} 133 (2005) 3497--3503.
\href{https://doi.org/10.1090/S0002-9939-05-07891-3}
{doi:10.1090/S0002-9939-05-07891-3}.

\bibitem{Alvino}
A. Alvino,
Sulla diseguaglianza di Sobolev in spazi di Lorentz,
\emph{Boll. Un. Mat. Ital. A (5)} 14 (1977) 148--156.

\bibitem{AnKimRyu}
J. An, J. Kim, P. Ryu,
Sobolev--Lorentz spaces with an application to the inhomogeneous
biharmonic NLS equation,
\emph{Discrete Contin. Dyn. Syst. Ser. B} 29 (2024) 3326--3345.
\href{https://doi.org/10.3934/dcdsb.2024006}
{doi:10.3934/dcdsb.2024006}.

\bibitem{BennettSharpley}
C. Bennett, R. Sharpley,
\emph{Interpolation of Operators},
Pure and Applied Mathematics, vol.~129,
Academic Press, Boston, 1988.

\bibitem{BerghLofstrom}
J. Bergh, J. L\"ofstr\"om,
\emph{Interpolation Spaces: An Introduction},
Grundlehren der mathematischen Wissenschaften, vol.~223,
Springer-Verlag, Berlin--New York, 1976.
\href{https://doi.org/10.1007/978-3-642-66451-9}
{doi:10.1007/978-3-642-66451-9}.

\bibitem{BoucletMizutani}
J.-M. Bouclet, H. Mizutani,
Uniform resolvent and Strichartz estimates for Schr\"odinger
equations with critical singularities,
\emph{Trans. Am. Math. Soc.} 370 (2018) 7293--7333.
\href{https://doi.org/10.1090/tran/7243}
{doi:10.1090/tran/7243}.

\bibitem{BurqEtAl}
N. Burq, F. Planchon, J.~G. Stalker, A.~S. Tahvildar-Zadeh,
Strichartz estimates for the wave and Schr\"odinger equations with the
inverse-square potential,
\emph{J. Funct. Anal.} 203 (2003) 519--549.
\href{https://doi.org/10.1016/S0022-1236(03)00238-6}
{doi:10.1016/S0022-1236(03)00238-6}.

\bibitem{FanKillipVisanZhao}
C. Fan, R. Killip, M. Visan, Z. Zhao,
Dispersive decay for the mass-critical nonlinear Schr\"odinger
equation,
\emph{Math. Z.} 311 (2025) Art.~21, 16 pp.
\href{https://doi.org/10.1007/s00209-025-03821-8}
{doi:10.1007/s00209-025-03821-8}.

\bibitem{FanStaffilaniZhao}
C. Fan, G. Staffilani, Z. Zhao,
On decaying properties of nonlinear Schr\"odinger equations,
\emph{SIAM J. Math. Anal.} 56 (2024) 3082--3109.
\href{https://doi.org/10.1137/23M1557544}
{doi:10.1137/23M1557544}.

\bibitem{FanZhao2021}
C. Fan, Z. Zhao,
Decay estimates for nonlinear Schr\"odinger equations,
\emph{Discrete Contin. Dyn. Syst.} 41 (2021) 3973--3984.
\href{https://doi.org/10.3934/dcds.2021024}
{doi:10.3934/dcds.2021024}.

\bibitem{FanZhao2023}
C. Fan, Z. Zhao,
A note on decay property of nonlinear Schr\"odinger equations,
\emph{Proc. Am. Math. Soc.} 151 (2023) 2527--2542.
\href{https://doi.org/10.1090/proc/16296}
{doi:10.1090/proc/16296}.

\bibitem{FanelliDecay}
L. Fanelli, V. Felli, M.~A. Fontelos, A. Primo,
Time decay of scaling critical electromagnetic Schr\"odinger flows,
\emph{Commun. Math. Phys.} 324 (2013) 1033--1067.
\href{https://doi.org/10.1007/s00220-013-1830-y}
{doi:10.1007/s00220-013-1830-y}.

\bibitem{FanelliFrequency}
L. Fanelli, V. Felli, M.~A. Fontelos, A. Primo,
Frequency-dependent time decay of Schr\"odinger flows,
\emph{J. Spectr. Theory} 8 (2018) 509--521.
\href{https://doi.org/10.4171/JST/204}
{doi:10.4171/JST/204}.

\bibitem{GuoHuangSong}
Z. Guo, C. Huang, L. Song,
Pointwise decay of solutions to the energy critical nonlinear
Schr\"odinger equations,
\emph{J. Differ. Equ.} 366 (2023) 71--84.
\href{https://doi.org/10.1016/j.jde.2023.04.018}
{doi:10.1016/j.jde.2023.04.018}.

\bibitem{Hunt}
R.~A. Hunt,
An extension of the Marcinkiewicz interpolation theorem to Lorentz
spaces,
\emph{Bull. Am. Math. Soc.} 70 (1964) 803--807.
\href{https://doi.org/10.1090/S0002-9904-1964-11242-8}
{doi:10.1090/S0002-9904-1964-11242-8}.

\bibitem{KeelTao}
M. Keel, T. Tao,
Endpoint Strichartz estimates,
\emph{Am. J. Math.} 120 (1998) 955--980.
\href{https://doi.org/10.1353/ajm.1998.0039}
{doi:10.1353/ajm.1998.0039}.

\bibitem{KillipNLS}
R. Killip, C. Miao, M. Visan, J. Zhang, J. Zheng,
The energy-critical NLS with inverse-square potential,
\emph{Discrete Contin. Dyn. Syst.} 37 (2017) 3831--3866.
\href{https://doi.org/10.3934/dcds.2017162}
{doi:10.3934/dcds.2017162}.

\bibitem{KillipSobolev}
R. Killip, C. Miao, M. Visan, J. Zhang, J. Zheng,
Sobolev spaces adapted to the Schr\"odinger operator with inverse-square
potential,
\emph{Math. Z.} 288 (2018) 1273--1298.
\href{https://doi.org/10.1007/s00209-017-1934-8}
{doi:10.1007/s00209-017-1934-8}.

\bibitem{Kowalski}
M. Kowalski,
Dispersive decay for the energy-critical nonlinear Schr\"odinger
equation,
\emph{J. Differ. Equ.} 429 (2025) 392--426.
\href{https://doi.org/10.1016/j.jde.2025.02.040}
{doi:10.1016/j.jde.2025.02.040}.

\bibitem{LiuYuZhou}
H. Liu, Q. Yu, H. Zhou,
Sharp and endpoint two-weight fractional integral estimates for
Schr\"odinger operators with inverse-square potentials,
\emph{arXiv preprint} arXiv:2607.09585 (2026).
\href{https://doi.org/10.48550/arXiv.2607.09585}
{doi:10.48550/arXiv.2607.09585}.


\bibitem{MiaoSuZheng}
C. Miao, X. Su, J. Zheng,
The $W^{s,p}$-boundedness of stationary wave operators for the
Schr\"odinger operator with the inverse-square potential,
\emph{Trans. Am. Math. Soc.} 376 (2023) 1739--1797.
\href{https://doi.org/10.1090/tran/8823}
{doi:10.1090/tran/8823}.

\bibitem{ONeil}
R. O'Neil,
Convolution operators and $L(p,q)$ spaces,
\emph{Duke Math. J.} 30 (1963) 129--142.
\href{https://doi.org/10.1215/S0012-7094-63-03015-1}
{doi:10.1215/S0012-7094-63-03015-1}.


\bibitem{Tartar}
L. Tartar,
Interpolation non lin\'eaire et r\'egularit\'e,
\emph{J. Funct. Anal.} 9 (1972) 469--489.
\href{https://doi.org/10.1016/0022-1236(72)90022-5}
{doi:10.1016/0022-1236(72)90022-5}.

\bibitem{WangXuZhang}
J. Wang, C. Xu, F. Zhang,
Decay estimates for nonlinear Schr\"odinger equation with the
inverse-square potential,
\emph{J. Math. Anal. Appl.} 550 (2025) Art.~129631.
\href{https://doi.org/10.1016/j.jmaa.2025.129631}
{doi:10.1016/j.jmaa.2025.129631}.

\bibitem{Ziemer}
W.~P. Ziemer,
\emph{Weakly Differentiable Functions: Sobolev Spaces and Functions of
Bounded Variation},
Graduate Texts in Mathematics, vol.~120,
Springer, New York, 1989.
\href{https://doi.org/10.1007/978-1-4612-1015-3}
{doi:10.1007/978-1-4612-1015-3}.

\end{thebibliography}
\end{document}